\numberwithin{equation}{section}
\newtheorem{lemma}{Lemma}
\newtheorem{theorem}{Theorem}
\theoremstyle{definition}
\theoremstyle{remark}
\newtheorem{remark}{Remark}
\newcommand{\convas}[1][]{\xrightarrow[#1]{\mathrm{a.s.}}}
\newcommand{\convp}[1][]{\xrightarrow[#1]{\PP}}
\newcommand{\convl}[1][]{\xrightarrow[#1]{\mathrm{law}}}
\newcommand{\convsl}[1][]{\xrightarrow[#1]{\mathrm{stably}}}
\newcommand{\NN}{\mathbb{N}}
\newcommand{\RR}{\mathbb{R}}
\newcommand{\PP}{\mathbb{P}}
\newcommand{\EE}{\mathbb{E}}
\newcommand{\cG}{\mathcal{G}}
\newcommand{\cM}{\mathfrak{M}}
\newcommand{\cQ}{\mathfrak{Q}}
\newcommand{\ind}[1]{\mathbf{1}_{#1}}
\newcommand{\as}{\mathrm{a.s.}}
\newcommand{\vd}{\,\mathrm{d}}
\newcommand{\dvec}{ (a, b) }
\newcommand{\dvecj}{ (a_j, b_j) }
\newcommand{\destimj}{ ( \alpha_T^{(j)},\beta_T^{(j)}) }
\newcommand{\destim}{ ( \alpha_T,\beta_T) }
\newcommand{\destimNj}[1]{ ( \widehat a_{#1,N}^{(j)}, \widehat b_{#1,N}^{(j)} )}
\newcommand{\destimN}[1]{ ( \widehat a_{#1,N}, \widehat b_{#1,N})}
\DeclareMathOperator{\sgn}{sgn}
\newcommand{\eqdef}{\mathbin{:=}}
\begin{document}

\title{
Drift estimation of the threshold Ornstein-Uhlenbeck process from continuous and discrete observations
}
\author{
Sara Mazzonetto\footnote{Universit\'e de Lorraine, CNRS, Inria, IECL, F-54000 Nancy, France. E-mail: \texttt{sara.mazzonetto@univ-lorraine.fr}}
\ and
Paolo Pigato\footnote{%
Department of Economics and Finance, University of Rome Tor Vergata, Via Columbia 2, 00133 Roma, Italy. 
E-mail: \texttt{paolo.pigato@uniroma2.it} 
{\smallskip\newline}
{\bf Acknowledgements.} We are grateful to G. Conforti, A. Lejay and E. Mariucci for discussion, and to the referees of Statistica Sinica for their helpful remarks and suggestions.}}

\date{\today}

\maketitle

\begin{abstract}
\noindent
We refer by \emph{threshold Ornstein-Uhlenbeck} to a continuous-time threshold autoregressive process. It follows the Ornstein-Uhlenbeck dynamics when above or below a fixed level, yet at this level (threshold) its coefficients can be discontinuous. We discuss (quasi)-maximum likelihood estimation of the drift parameters, both assuming continuous and discrete time observations. In the ergodic case, we derive consistency and speed of convergence of these estimators in long time and high frequency. Based on these results, we develop a test for the presence of a threshold in the dynamics. Finally, we apply these statistical tools to short-term US interest rates modeling.
\end{abstract}

\bigskip

\noindent{\textbf{Keywords: }} Threshold diffusion,
maximum likelihood, regime-switching, self-exciting process, interest rates, threshold Vasicek Model, multi-threshold.

\bigskip

\noindent{\textbf{AMS 2010: }} primary: 62M05; secondary: 62F12; 60J60.
\\
\smallskip
\noindent{\textbf{JEL Classification:}} primary: C58; secondary: C22, G12.

\section{Introduction}
\label{sec:intro}

We consider the diffusion process solution to the following stochastic differential equation (SDE)
\begin{equation}
    \label{eq:AffDOBM}
    X_t=X_0+\int_0^t \sigma(X_s)\vd W_s+\int_0^t \left( b(X_s) - a(X_s) \, X_s \right) \vd s , \quad t\geq 0,
\end{equation}
with piecewise constant volatility coefficient, possibly discontinuous at $r\in \RR$,
\begin{equation}
    \label{sigmaDOBM}
    \sigma(x)= \sigma_+ \ind{\{x\geq r\}} + \sigma_- \ind{\{x < r\}} >0
\end{equation}
and similarly piecewise affine drift coefficient
\begin{equation}\label{SETvas}
    b(x)= b_+ \ind{\{x\geq r\}} + b_- \ind{\{x < r\}}
		\quad\text{and}\quad
    a(x)= a_+ \ind{\{x\geq r\}} + a_- \ind{\{x < r\}}.
\end{equation}
Strong existence of a {unique} solution to \eqref{eq:AffDOBM} follows
from the results of \cite{legall}.  Separately on $(r,\infty)$ and $(-\infty,r)$, the process follows the Ornstein-Uhlenbeck (OU) dynamics which, in the context of interest rates modeling, is referred to as \emph{Vasicek model}. Following this nomenclature, \cite{interestrate} refer to \eqref{eq:AffDOBM} as \emph{Self Exciting Threshold (SET) Vasicek model}. \cite{su2015,su2017} refer to such model as \emph{threshold diffusion} (TD) or first-order continuous-time threshold autoregressive model (see also \citep{tong1990}).

The process is~\emph{ergodic} as long as the drift pushes the process up when the process reaches large, negative values, and down when it reaches large, positive values. Note that if $a_- > 0$, the drift points towards $b_-/a_-$ when $X_s$ is below the threshold $r$, if $a_+ > 0$, towards $b_+/a_+$ when $X_s$ is above $r$. We allow here a null linear part, and if $a_-=0$ we need $b_->0$ to push the process up when very negative and if $a_+=0$ we need $b_+<0$ to push it down when very positive. From these considerations, ergodicity can be easily checked (see explicit condition~\eqref{eq:ergodic:cases:th} below). 

We also consider, in Section~\ref{sec:multit}, a \emph{multi-threshold} version of \eqref{eq:AffDOBM}, where we allow for $d$ discontinuity levels $r_{1}<\dots<r_{d}$. In this case, ergodicity is determined by the same conditions, to be checked on the values of the coefficients on the intervals $(-\infty,r_{1})$ and $[r_{d},+\infty)$.

In this paper {we discuss the asymptotic behavior} of maximum likelihood estimators (MLE) and quasi-maximum likelihood estimators (QMLE) for the drift parameters $(a_-,a_+,b_-,b_+)$, both from continuous and discrete time observations. 
Let $N$ be the number of observations, $T_N$ the time horizon and $\Delta_{N}$ the largest interval between two consecutive observations.
In the ergodic case, if $T_N\to \infty$ and $T_N \Delta_{N} \to 0$ as $N\to \infty$, we prove a central limit theorem (CLT) giving the convergence of the estimators with speed $\sqrt{T_N}$ to the real parameters, {i.e.~asymptotic normality} (see~Theorem~\ref{th:joint:CLT} below). To the best of our knowledge, this is the first result of this kind for TDs (SDEs with discontinuous {-drift and diffusion-} coefficients).
\\
The discontinuity in the coefficients makes it difficult to pass from discrete to continuous time observations.
Indeed, a precise analysis of the error hinges on the behavior of certain discretizations of the local time of the diffusion at the threshold.
{We also 
prove, for fixed time horizon, 
that the discrete (Q)MLE based on $N$ equally spaced observations converges in high frequency to the continuous (Q)MLE, with speed $N^{1/4}$ (see Theorem~\ref{th:disc} below). 
This slow convergence of the discrete (Q)MLE to the continuous (Q)MLE follows from the slow convergence, with speed $N^{1/4}$, of the discretization of the local time. }
\\
Based on these results we provide a test to decide whether a threshold is present in the dynamics. Finally we use these tools to analyze short term US interest rates.

\paragraph{Literature review.}
\cite{su2015,su2017}  study the asymptotic behavior of the continuous time QMLE of a TD with drift as in \eqref{SETvas} and piecewise regular diffusivity. 
In particular, they construct a hypothesis test to decide
whether the drift is affine or piecewise affine.

The estimation of the volatility parameters $\sigma_\pm$ in~\eqref{eq:AffDOBM}
from high-frequency data is studied in \citep{LP} and the drift estimation in case $a_\pm=0$ in \citep{lp2}. In the purely linear drift case $b_\pm=0$, \cite{kutoyants2012} studies the problem of identifying the threshold parameter $r$ and  \cite{dieker2013} and the related stream of research consider similar models, with $r=0$ (so that the drift function is continuous) in a multidimensional setting. {The (related) problem of drift estimation in a skew OU process is considered in \citep{XingZhaoLi2020}.}

In this document the coefficients are discontinuous and the behavior at $r$ hard to handle; for high-frequency observations we do so using the discretization results in \citep{mazzonetto2019rates}. 
The convergence in high frequency and long time for estimators of discretely observed diffusions have been discussed e.g. in \citep{kessler,alaya_kebaier_2013,amorino_glotier}, but to the best of our knowledge ours is the first such result in the case of discontinuous coefficients. 

\cite{YU2020}
study {numerically} an approximate 
MLE (AMLE)
from discrete time observations 
simultaneously for threshold, drift and diffusion coefficients of threshold diffusions including OU process or CIR model. They compare their AMLE with QMLE, showing numerical evidence for consistency. 
The recent work \citep{HuXi} considers a generalized moment estimator for a TD which is discretely observed, with fixed time lag.

Threshold autoregressive (TAR) models in discrete time were introduced by  H. Tong in the early 1980s \citep{tong1983,Tong:2011ud,tong2015}. Within this class, self-exciting TAR (SETAR) models rely on a spatial segmentation, with a change in the dynamics according to the position of the process,
below or above a threshold, and can be seen as a discrete analogue to the TD. We refer to \citep{chan1993,Rabemananjara:1993dh,Yadav:1994km,brockwell_williams_1997,Chen:2011bk} and references therein for this class of econometric models and related inference problems. 

Diffusion processes have been widely used to model interest rate dynamics, celebrated classical examples being \citep{vasicek,cir,Hull_White,Black_Karasinski}. These models are designed to capture the fact that interest rates are typically mean reverting, see
\citep{WuZhang}.  However, non linear effects (e.g.~multi-modality) are not captured by these models.  \cite{aitsahalia} shows that mean-reversion for interest rates is strong outside a middle region, suggesting the existence of a target band. This is similar to what is observed in exchange rates  \citep{krugman} and explainable with policy adjustments in response to changes in such rates. There is evidence for a ``normal'' low-mean regime and an ``exceptional'' high-mean regime, and in general for bi-modality (or even multi-modality) in interest rate dynamics, that one can model using TD \eqref{SETvas}.  In general, non-linearities and regime changes in short-term interest rates have been widely documented, and several threshold models have been proposed both in discrete and continuous time, see
\citep{GRAY199627,pfann,AngBekaert,AngBekaert2,Kalimipalli,Gospodinov,AngBekaert3,LemkeArchontakis,LemkeArchontakis2}. 
We refer to \citep{interestrate} and bibliography therein for a thorough discussion of SET diffusions in interest rate modeling.
In recent years TDs have been used in several aspects of financial modeling, such as  option pricing \citep{liptonsepp,gairat,DongWong,lipton:2018,pigato} and  time series modeling \citep{ang,lp1}. TD models for interest rates have been considered in \citep{pai,interestrate,su2015,su2017}.
In this paper, we focus on (Q)MLE estimation of such models, and in particular on inference from high frequency observations and their convergence to continuous time estimators, as well as their convergence in long time to real values of the parameters.

\paragraph{Outline.} {In Section \ref{sec:mainresults} we present our main results on convergence of drift estimators for threshold OU. 
In Section \ref{sec:numerics} we  implement the estimators, discuss threshold estimation and testing and work with US interest rates data.
Proofs are collected in Section \ref{sec:proofs} and in Section~\ref{sec:multit} we discuss an extension to a multi-threshold setting}.

\section{(Quasi) maximum likelihood estimation}\label{sec:mainresults}
Let $X$ be the process strong solution to~\eqref{eq:AffDOBM} where $W$ is a Brownian motion and $X_0$ is independent of $W$ (e.g., $X_0$ is deterministic).



We see in next equation~\eqref{eq:ergodic:cases} that $X$ is ergodic if 
\begin{equation} \label{eq:ergodic:cases:th}
\begin{split}
& \textrm{
[($a_+>0$ and $b_+\in \RR$) or ($a_+=0$ and  $b_+< 0$)]} 
\\
& \textrm{and [($a_->0$ and $b_-\in \RR)$ or ($a_-=0$ and  $b_-> 0$)].
}
\end{split}
\end{equation}

\subsection{Maximum and quasi-maximum  likelihood  estimator from continuous time observations}
We assume in this section to observe the process on the time interval $[0,T]$, $T\in (0,\infty)$.
For $T\in (0,\infty)$ and $m=0,1,2$, we define
\begin{equation}\label{def:M:Q}
 \cM_T^{\pm,m}:=
	 \int_0^T  X_s^m \ind{\{ \pm (X_s-r) \geq 0 \} } \vd X_s
\quad \text{and} \quad	
\cQ_T^{\pm,m} 	
:=
\int_0^T X_s^m \ind{\{ \pm (X_s-r) \geq 0 \}}\vd s
\end{equation}
and take as likelihood function
the Girsanov weight 
\begin{align} \label{eq:RN}
	G_T(a_+, b_+, a_-, b_- ) 
	= 
	\exp{\left(\int_0^T \frac{b(X_s) - a(X_s)X_s}{(\sigma(X_s))^2} \vd X_s - \frac12 \int_0^T \frac{(b(X_s) - a(X_s)X_s)^2}{(\sigma(X_s))^2} \vd s \right)}.
\end{align}
We also consider a quasi-likelihood defined as in \citep{su2015} as
\begin{equation} \label{QMLE:eq0}
	\Lambda_T (a_+, b_+, a_-, b_- ) 
	= \int_0^T b(X_s) - a(X_s)X_s  \vd X_s  - \frac12 \int_0^T (b(X_s) - a(X_s)X_s)^2 \vd s.
\end{equation}

\begin{theorem} \label{th:continuous}	
Let $\pm\in \{+,-\}$. 
\begin{enumerate}[i)]
\item \label{th1:item:QMLE}
For every $T\in (0,\infty)$ the MLE and QMLE are given by
\begin{equation} \label{eq:QMLE_ct}	
\begin{pmatrix}
	\alpha^\pm_T, 
	&
	\beta^\pm_T 
\end{pmatrix}
=
\begin{pmatrix}
	\frac{	\cM^{\pm,0}_T  \cQ^{\pm,1}_T			
	-\cQ^{\pm,0}_T
\cM^{\pm,1}_T
	}{ 
\cQ^{\pm,0}_T 
\cQ^{\pm,2}_T	-(\cQ^{\pm,1}_T)^2},
		&
	\frac{
\cM^{\pm,0}_T \cQ^{\pm,2}_T-\cQ^{\pm,1}_T	
\cM^{\pm,1}_T	}{
\cQ^{\pm,0}_T
\cQ^{\pm,2}_T-(\cQ^{\pm,1}_T)^2
}
	\end{pmatrix}.
\end{equation}
\end{enumerate}
Assume now that the process is ergodic.
\begin{enumerate}[i)]
\setcounter{enumi}{1}
\item \label{th1:item:LLN}
The following law of large numbers (LLN) holds:
$
(\alpha^\pm_T-a_\pm , \ \beta^\pm_T-b_\pm) \convas[T\to\infty] 0,
$
i.e.,~the estimator is consistent.
\item \label{th1:item:CLT}
The following CLT holds: 
$\sqrt{T}
\begin{pmatrix}
\alpha^\pm_T-a_\pm ,
& \beta^\pm_T-b_\pm
\end{pmatrix}
\xrightarrow[{T\to \infty}]{\mathrm{stably}}  
N^\pm=\begin{pmatrix}
N^{\pm, \alpha}, & N^{\pm,\beta}
\end{pmatrix}
$ 
where  
$\begin{pmatrix}
	N^{+,\alpha}, & N^{+,\beta}
\end{pmatrix}
$
and 
$\begin{pmatrix}
	N^{-,\alpha}, & N^{-,\beta}
\end{pmatrix}
$
are two independent, independent of $X$, two-dimensional Gaussian random variables 
{with covariance matrices respectively $\sigma_+^2 \Gamma_+^{-1}$ and $\sigma_-^2 \Gamma_-^{-1}$ 
where
\begin{equation} \label{th:erg:covariance}
\Gamma_\pm :=  
\begin{pmatrix} \overline{\cQ}^{\pm,2}_\infty &  - \overline{\cQ}^{\pm,1}_\infty \\
 - \overline{\cQ}^{\pm,1}_\infty & \overline{\cQ}^{\pm,0}_\infty 
\end{pmatrix}, 
\end{equation}
and $\overline{\cQ}^{\pm,i}_\infty ,\, i\in \{0,1,2\}$
are real constants such that
$\lim_{t\rightarrow \infty} \frac{{\cQ}^{\pm,i}_t}{t} \overset{\as}= \overline{\cQ}^{\pm,i}_\infty$ 
(explicit expressions for such constants are given in next Lemma~\ref{eq:Qinfinity2}; {more details on 
stable convergence 
can be found 
in
Remark~\ref{rem:stable} below}).}
\item \label{th1:item:LAN}
The \emph{local asymptotic normality} (LAN) property (see~\cite{lecam00}) holds for the likelihood evaluated at the true parameter
$\theta := (a_+,b_+,a_-,b_-)$
with rate of convergence
$\frac1{\sqrt{T}}$ and asymptotic Fisher information 
\[ \Gamma 
	= \begin{pmatrix} \sigma_+^{-2} \Gamma_+ & 0_{\RR^{2\times 2}} \\
		0_{\RR^{2\times 2}} & \sigma_-^{-2} \Gamma_- \end{pmatrix}.\]
This means that there exists a random vector 
${A_T}
\in \RR^4$ 
such that for all small perturbations $\Delta \theta :=(\Delta a_+,\Delta b_+, \Delta a_-,\Delta b_-)$ it holds that the quantity
\begin{equation} \label{eq:LAN}
\begin{split}
	& {\scriptsize \log \frac{ G_T(\theta + \frac{1}{\sqrt{T}}\Delta \theta)} 
	{G_T(\theta)}}
	{\scriptsize 
	- \left(  \Delta \theta \cdot A_T - \frac1{2} \Delta \theta \cdot  \Gamma 
\Delta \theta \right)}
\end{split}
\end{equation}
converges to 0 in probability as $T\to\infty$.
\end{enumerate}
\end{theorem}

\begin{remark} \label{rem:stable}
The notion of \emph{stable convergence} was introduced by \cite{renyi}. We refer to \citep{js} or \citep{jp} for a detailed exposition. 
%
In this document we just mention that this notion of convergence is stronger than convergence in law, but weaker than convergence in probability.  We use in this paper the following crucial property: for random variables $Y_n$, $Z_n$ ($n\geq 1$), $Y$ and $Z$, 
\begin{equation}\label{scl}
    \text{if }
Z_n \convsl[n\to\infty] Z
\text{ and }
Y_n \convp[n\to\infty] Y
\text{ then }
(Y_n,Z_n) \convsl[n\to\infty] (Y,Z).
\end{equation}
\end{remark}

\subsection{Drift estimation from discrete observations}
\label{sec:est:disc}

We assume in this section to observe the process on the discrete time grid $0=t_0 < t_1 <\ldots<t_{N-1}< t_N=T$, 
for $N\in \NN$, $T\in (0,\infty)$, and set $\Delta_N=\max_{k=1,\dots, N} \{t_{k}-t_{k-1}\}$. 
We define  $X_i\eqdef X_{t_i}$  with $i=0,\ldots,N$.

The discrete versions of \eqref{def:M:Q} are defined as follows: for $m=0,1,2$ and $\pm\in \{-,+\}$, let
\begin{equation} \label{def:M:Q:disc}
\begin{split}
   & \cM^{\pm,m}_{T,N} := \sum_{k=0}^{N-1} 
    X_k^m \ind{ \{ \pm (X_k-r) \geq 0 \} } (X_{k+1}-X_k),
\quad \text{and} \\
&\quad
    \cQ^{\pm,m}_{T,N} := \sum_{k=0}^{N-1} 
    X_k^m \ind{\{ \pm (X_k-r) \geq 0 \}} (t_{k+1}-t_k).
\end{split}
\end{equation}
We refer with \emph{discretized likelihood} (corresponding to \eqref{eq:RN}) 
to
\begin{equation*}
\begin{split}
  &  G_{T,N}(a_+,b_+, a_-,b_-)\\
  & =\exp\bigg(\sum_{i=0}^{N-1} \bigg(\frac{b(X_i) - a(X_i) X_i }{\sigma(X_i)^2}(X_{i+1}-X_i)-  \frac{t_{i+1}-t_i}{2}\frac{(b(X_i) - a(X_i) X_i )^2}{\sigma(X_i)^2}  \bigg)\bigg)
  \end{split}
\end{equation*}
and with \emph{discretized quasi-likelihood}
(corresponding to \eqref{QMLE:eq0}) to 
\begin{equation*}
\begin{split}
    &\Lambda_{T,N}(a_+,b_+, a_-,b_-)\\
    &=\sum_{i=0}^{N-1} \left( (b(X_i) - a(X_i) X_i ) (X_{i+1}-X_i)-
    \frac{t_{i+1}-t_i}{2} (b(X_i) - a(X_i) X_i )^2
    \right).
\end{split}
\end{equation*}
For $N \in \NN$, $T\in (0,\infty)$, let
\begin{equation} \label{eq:QMLE_dis_time}	
\begin{pmatrix}
 \widehat{a}^\pm_{T,N},
		&
 \widehat{b}^\pm_{T,N}
	\end{pmatrix}
=
\begin{pmatrix}
 	 \frac{\cM^{\pm,0}_{T,N} \cQ^{\pm,1}_{T,N} - \cQ^{\pm,0}_{T,N}
\cM^{\pm,1}_{T,N}	
	}{ 
\cQ^{\pm,0}_{T,N}
\cQ^{\pm,2}_{T,N}	-(\cQ^{\pm,1}_{T,N})^2},
		&
 	\frac{
\cM^{\pm,0}_{T,N} \cQ^{\pm,2}_{T,N}-\cQ^{\pm,1}_{T,N}	
\cM^{\pm,1}_{T,N}	}{
\cQ^{\pm,0}_{T,N}
\cQ^{\pm,2}_{T,N}-(\cQ^{\pm,1}_{T,N})^2
}
	\end{pmatrix}.
\end{equation}

\begin{theorem} \label{th:joint:CLT}
Let $(T_N)_{N\in \NN}$ be a sequence in $(0,\infty)$.
For all $N\in \NN$, let $\Delta_N$ above be $T_N/N$, let $ \widehat{a}^\pm_{T_N,N},\widehat{b}^\pm_{T_N,N}$ be defined as in \eqref{eq:QMLE_dis_time}.
\begin{enumerate}[i)]
\item \label{th2:item:QMLE}
For every $N\in \NN$ the vector $(\widehat{a}^+_{T_N,N},\widehat{b}^+_{T_N,N}, \widehat{a}^-_{T_N,N}, \widehat{b}^-_{T_N,N})$ maximizes both the likelihood 
$G_{T_N,N}(a_+,b_+, a_-b_-)$ and the quasi-likelihood $\Lambda_{T_N,N}(a_+,b_+, a_-b_-)$.
\end{enumerate}
Assume now that the process is ergodic, i.e., \eqref{eq:ergodic:cases:th} is satisfied, and that $X$ is the stationary solution to \eqref{eq:AffDOBM}, i.e., $X_0$ follows {the stationary distribution} (cf.~\eqref{eq:stat:dist}). Moreover, assume
\[
\lim_{N\to\infty}T_N = \infty \quad \text{and} \quad \lim_{N\to\infty} \Delta_N = 0.
\]
\begin{enumerate}[i)]
\setcounter{enumi}{1}
\item \label{th2:item:LLN}
The following LLN holds:
$
(
    \widehat{a}^\pm_{T_N,N}, \ \widehat{b}^\pm_{T_N,N}
  )
\convp[N\to\infty]
   ( a_\pm, \ b_\pm )
$ 
(the estimator is consistent). 
\item \label{th2:item:CLT}
If $\lim_{N\to\infty} T_N \Delta_N = 0$, the following CLT jointly holds for the positive and negative sides: 
\[
\sqrt{T_N}
\begin{pmatrix}
 \widehat{a}^\pm_{T_N,N}
-a_\pm , 
&
 \widehat{b}^\pm_{T_N,N}
-b_\pm
\end{pmatrix}
\xrightarrow[N\to \infty]{\mathrm{stably}}
N^\pm=\begin{pmatrix}
N^{\pm, \alpha}, & N^{\pm,\beta}
\end{pmatrix}
\]
where 
$\begin{pmatrix}
	N^{+,\alpha}, & N^{+,\beta}
\end{pmatrix}
$
and 
$\begin{pmatrix}
	N^{-,\alpha}, & N^{-,\beta}
\end{pmatrix}
$
are as in Theorem~\ref{th:continuous}.
\item \label{th2:item:LAN}
If $\lim_{N\to\infty} T_N \Delta_N = 0$, the discretized likelihood satisfies~\eqref{eq:LAN} (with $T=T_N$).
\end{enumerate}
\end{theorem}

\begin{remark} 
If the largest time lag $\Delta_N=O(T_N/N)$ the conditions in Theorem~\ref{th:joint:CLT} become $\lim_{N\to\infty} T_N=\infty$ and $\lim_{N\to\infty} T_N/N =0$ for consistency and
$\lim_{N\to\infty} T_N=\infty$ and $\lim_{N\to\infty} T_N^2/N =0$ for asymptotic normality.
\end{remark}

The next result states that, for fixed time horizon, in high frequency, the estimator from discrete observations converges, {with an ``anomalous'' speed}, towards the  estimator from continuous observations.
Let $Y \colon \Omega \times [0,\infty) \to \RR$ be a semi-martingale, let ${r}\in \RR$, and let $T\in [0,\infty)$. Then we recall that
\begin{equation}\label{eq:def:local}
    L_T^{r}(Y)=\lim_{\varepsilon\to 0}\frac{1}{2\varepsilon}\int_0^T
    \ind{\{-\varepsilon\leq Y_s-{r}\leq \varepsilon\}}d\langle Y\rangle_s
\end{equation}
is the symmetric local time of $Y$ at ${r}$, up to time $T$.

\begin{theorem}\label{th:disc}
    Let $T\in (0,\infty)$ be fixed. 
\begin{enumerate}[i)]
\item \label{th3:item:QMLE}
For every $N\in \NN$, the likelihood $G_{T,N}(a_+,b_+, a_-b_-)$ and the
quasi-likelihood $\Lambda_{T,N}(a_+,b_+, a_-b_-)$ are both maximal at 
$(
    \widehat{a}^+_{T,N},\widehat{b}^+_{T,N}
    ,\widehat{a}^-_{T,N},\widehat{b}^-_{T,N} )
$
given in \eqref{eq:QMLE_dis_time}.
\item \label{th3:item:LLN}
It holds that
$
(
    \widehat{a}^+_{T,N},\widehat{b}^+_{T,N}
    ,\widehat{a}^-_{T,N},\widehat{b}^-_{T,N} )
\convp[N\to\infty]
   ( \alpha^+_{T},\beta^+_{T}
    ,\alpha^-_{T},\beta^-_{T} )
$
and
\begin{equation} \label{eq:CLT:T}
\begin{split}
& N^{1/4}
  \Big(
	( \widehat{a}^+_{T,N},\widehat{b}^+_{T,N}, \widehat{a}^-_{T,N},\widehat{b}^-_{T,N} ) - ( \alpha^+_{T},\beta^+_{T} ,\alpha^-_{T},\beta^-_{T} ) \Big) 
	 \\
	&     
	\xrightarrow[N\to \infty]{\mathrm{stably}}
	 \sqrt{\frac{4 \sqrt{T}}{3 \sqrt{2 \pi}}  \frac{\sigma_-^2 +\sigma_+^2}{\sigma_-+\sigma_+}} \left( 	
	\frac{  \cQ^{+,1}_{T} - r \cQ^{+,0}_{T}			
	}{ 
\cQ^{+,0}_{T} \cQ^{+,2}_{T}	-(\cQ^{+,1}_{T})^2}
	,
	\frac{
\cQ^{+,2}_{T} - r \cQ^{+,1}_{T} }{
\cQ^{+,0}_{T} \cQ^{+,2}_{T}-(\cQ^{+,1}_{T})^2
}
	,
	\right.
	\\
	&\qquad \qquad \left. 
	 - \frac{\cQ^{-,1}_{T} - r \cQ^{-,0}_{T} 		
	}{ 
\cQ^{-,0}_{T} \cQ^{-,2}_{T}	-(\cQ^{-,1}_{T})^2}
	,
 - \frac{
\cQ^{-,2}_{T}- r\cQ^{-,1}_{T}}{
\cQ^{-,0}_{T} \cQ^{-,2}_{T}-(\cQ^{-,1}_{T})^2
	}
 \right) B_{L^r_T(X)} 
\end{split}
\end{equation}
with $B$ Brownian motion independent of $X$ {and $L^r_T(X)$ symmetric local time of $X$ at $r$, up to time $T$ (see~\eqref{eq:def:local})}.
\end{enumerate}
\end{theorem}

\begin{remark}
The right hand side of \eqref{eq:CLT:T} has the same law as 
\[
\begin{split}
 \sqrt{\frac{4 \sqrt{T}}{3 \sqrt{2 \pi}}  \frac{\sigma_-^2 +\sigma_+^2}{\sigma_-+\sigma_+}}
\begin{pmatrix}
 \begin{pmatrix}
\cQ^{+,2}_T & -\cQ^{+,1}_T  \\
-\cQ^{+,1}_T & \cQ^{+,0}_T 
\end{pmatrix}^{\!-1}
&
\begin{pmatrix}
0&0 \\
0&0
\end{pmatrix}
\\
\begin{pmatrix}
0&0 \\
0&0
\end{pmatrix}
&
\begin{pmatrix}
 \cQ^{-,2}_T & -\cQ^{-,1}_T  \\
-\cQ^{-,1}_T & \cQ^{-,0}_T  
\end{pmatrix}^{\! -1}
	\end{pmatrix}
\begin{pmatrix}
-r \\
1 \\
r \\
-1 
\end{pmatrix} \sqrt{L^r_T(X)} B_{1}.
\end{split}
\]
\end{remark}

\begin{remark} \label{local:time:important} 
{One usually expects such discretizations to converge with speed $\sqrt{N}$. In this case, the lower speed of convergence is due to the discontinuity in the coefficients, and appears in connection with the local time. Indeed, the asymptotic behavior of the estimators is intrinsically related to the one of the local time of the process at the threshold. 
More precisely the difference $\cM^{\pm,m}_{T,N}-\cM^{\pm,m}_{T}$, $m=0,1$ can be rewritten involving terms $L^r_{T,N} - L^r_T (X)$, where $L^r_{T,N}$ is the following approximation of the local time from discrete time observations
\begin{equation} \label{def:LT:discr}
	L^r_{T,N}:= 2
	 \sum_{i=0}^{N-1} \ind{\{(X_{i T/N}-r)(X_{(i+1)T/N}-r)<0\}}|X_{(i+1)T/N}-r|
\end{equation}
for $N\in \NN$ 
(see equation~\eqref{eq:M:local:equal} for a more precise statement).}
\end{remark}

\begin{remark}[The skew OU process] \label{rem:skewOU}
{Let us consider the solution to the following SDE involving the local time
\begin{equation}  \label{eq:AffSDOBM}
    Y_t=Y_0+\int_0^t \bar{\sigma}(Y_s)\vd W_s+\int_0^t \left( \bar{b}(Y_s) - \bar{a}(Y_s) \, Y_s \right) \vd s + \bar{\beta} L_t^{\bar{r}}(Y) , \quad t\geq 0,
\end{equation}
with $\bar{\beta}\in (-1,1)$ and piecewise constant functions $\bar{\sigma}$, $\bar{a}$, $\bar{b}$ possibly discontinuous at the threshold $\bar{r}\in \RR$, as in~\eqref{sigmaDOBM} and \eqref{SETvas}.

\cite{XingZhaoLi2020} 
assume $\bar{\beta}$ and $\bar{\sigma}$ known and
consider drift parameters estimation for $Y$, based on discrete observations, in the case of constant $\bar{\sigma}, \bar{a}, \bar{b}$ coefficients and local time at $0$. In this setting, $Y$ is referred to as ``skew OU process'' ({see also \citep{fengparameter}}).

Consider now the more general case of $\bar{\sigma}, \bar{a}, \bar{b}$ as in~\eqref{sigmaDOBM} and \eqref{SETvas}. If  we assume that only $\bar{\beta}$ is known,
all the results in Section \ref{sec:mainresults} on drift estimation for $X$ hold similarly for drift estimation of $Y$.
This follows from the fact that a simple transformation allows us to reduce the skew OU to a threshold OU with threshold at $0$, getting rid of the local time in the dynamics. 
 }
\end{remark}

\begin{remark}[The threshold CIR process]
\cite{su2015,su2017} and \cite{YU2020} consider diffusion processes with drift as in \eqref{SETvas}, with more flexibility on the diffusion coefficient $\sigma(\cdot)$, so that their analysis also applies to the process as in \eqref{eq:AffDOBM} with 
	\begin{equation}
    \label{sigmaCIR}
    \sigma(x)= 
	  \sigma_+ \sqrt{x} \, \ind{\{x\geq r\}} + \sigma_- \sqrt{x} \, \ind{\{x < r\}}. 
\end{equation}
We refer here to this process by threshold CIR (Cox-Ingersoll-Ross). In these works, the proposed estimators are always QMLE, maximising \eqref{QMLE:eq0}, that does not depend on the diffusivity $\sigma(\cdot)$. Here, with our (more restrictive) piecewise-constant choice for the diffusivity, we are able to show that the considered estimator is a genuine MLE. 
In our setting, we expect a result analogous to Theorem \ref{th:continuous} to apply to the QMLE for the threshold CIR as well, but with a  less explicit limit Gaussian law in the CLT, cf. also \citep{su2015}. 
On the other hand, the proof of the discrete time Theorem \ref{th:joint:CLT} makes use of bounds on hitting times for the OU process. The corresponding result for the threshold CIR process does not seem to be a trivial extension.
\end{remark}

\section{Threshold estimation, testing and interest rates}\label{sec:numerics}
We simulate the threshold OU process using the Euler scheme \citep{BOKIL} (an alternative approach for simulation consists in discretizing space instead of time, cf. \citep{cui}) and use the estimator based on discrete observations. The implementation has been done using~\texttt{R}. Parameters are as in Table~\ref{table:simulation_parameters}.
\begin{table}[htp]
\centering
\begin{tabular}
{c|c|c|c|c|c|c}%
 $r$ &
 $b_-$ & $b_+$ & 
 $a_-$ & $a_+$ &
$\sigma_-$ & $\sigma_+$ 
  \\\hline
$0.01$ &
 $-0.002$ & $0.003$ & 
 $0.1$ & $0.11$ &
$0.011$ & $0.01$ 
\end{tabular}
\caption{Simulations parameters.
\label{table:simulation_parameters}}
\end{table}
In Figure~\ref{fig:CLT} we see an example of the CLT in Theorem \ref{th:joint:CLT}. 
\begin{figure}[ht!]
\centering
	\includegraphics[scale=.35]{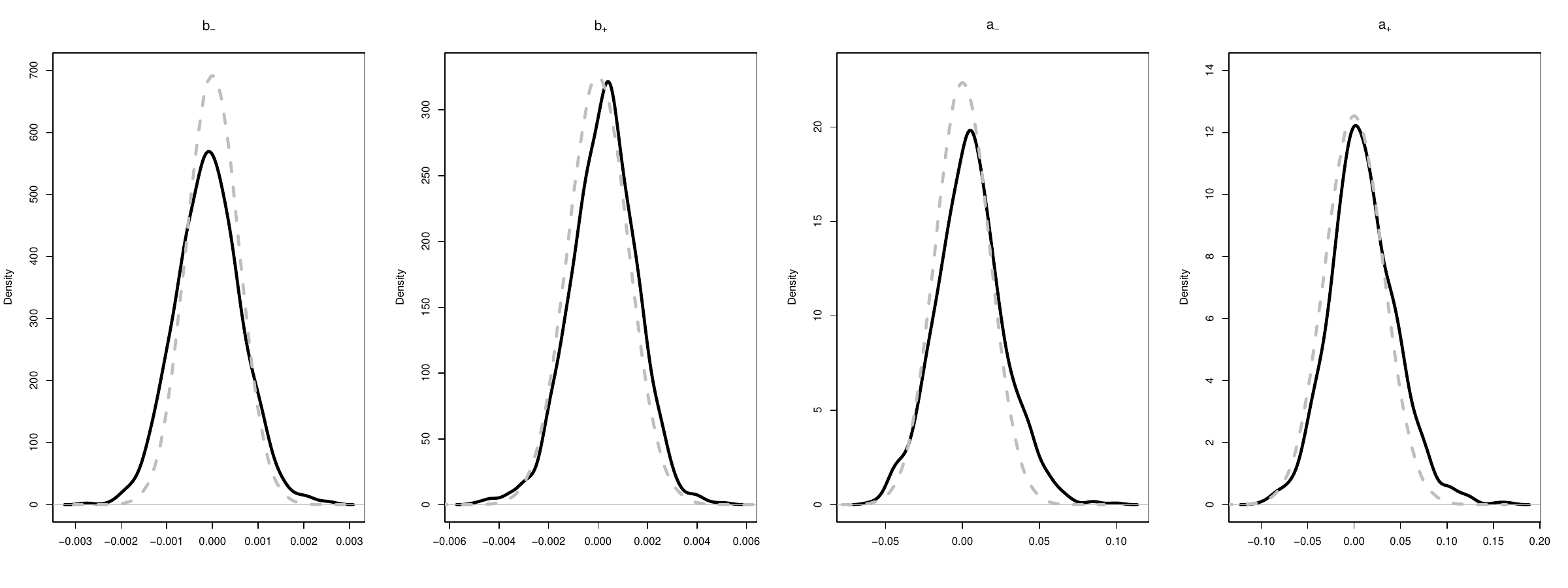}
\caption{CLT in Theorem~\ref{th:joint:CLT}.\eqref{th2:item:CLT}, with parameters as in Table~\ref{table:simulation_parameters}. We plot the theoretical distribution of the estimation error (dashed line) and compare with
the distribution of the error on $n=10^3$ trajectories, with  $T=10^3$ and $N=10^6$ observations on each trajectory.
  }
\label{fig:CLT}
\end{figure}
In Table \ref{tab:metrics:simulation} we show mean, standard deviation and mean squared error of the estimators on simulated trajectories.
\begin{table}[htp]
\centering
\begin{tabular}
{c|c|c|c|c}%
 parameter & $b_-$ & $b_+$ & $a_-$ & $a_+$ 
 \\\hline
mean & $-0.00204$ &
$0.00318$ &
$0.105$ &
$0.119$ 
 \\\hline
MSE &
$5.10\times 10^{-7}$ &
$1.62\times 10^{-6}$ &
$0.000526$ &
$0.00130$ 
 \\\hline
simulated sd &
$0.000713$ &
$0.00126$ &
$0.0223$ &
$0.0349$ 
 \\\hline
predicted sd &
$0.000575$ &
$0.00122$ &
$0.0178$ &
$0.0317$
\end{tabular}
\caption{
Mean, mean squared error (MSE) and standard deviation (simulated sd) of the 
(Q)MLE estimators in \eqref{eq:QMLE_dis_time}, with parameters as in Table \ref{table:simulation_parameters}, on $n = 10^{3}$ trajectories, with $T = 10^{3}$ and $N = 10^{6}$ observations on each trajectory. The ``predicted sd'' is the sd predicted by the Gaussian CLT in Theorem \ref{th:joint:CLT}.\label{tab:metrics:simulation}}
\end{table}
In Figure~\ref{fig:disc} we see an example of the convergence in Theorem~\ref{th:disc}, using that~\eqref{eq:CLT:T} can be rewritten as
\begin{small}
\begin{equation} \label{eq:CLT:Tbis}
\begin{split}
 N^{1/4}
  \Big(
	\frac{ \widehat{a}^\pm_{T,N} - \alpha^\pm_{T}}{\cQ^{\pm,1}_{T} - r \cQ^{\pm,0}_{T}} , \frac{\widehat{b}^\pm_{T,N} -\beta^\pm_{T}}{\cQ^{\pm,2}_{T} - r \cQ^{\pm,1}_{T}}  \Big)  
	\sqrt{\frac{3 \sqrt{2 \pi}}{4 \sqrt{T} L^r_T(X)}  \frac{\sigma_-+\sigma_+}{\sigma_-^2 +\sigma_+^2}} 
	(\cQ^{\pm,0}_{T} \cQ^{\pm,2}_{T}-(\cQ^{\pm,1}_{T})^2)\\
	\xrightarrow[N\to \infty]{\mathrm{stably}}
	\pm  B_{1},
	\mbox{ for } \pm\in \{-,+\}.
\end{split}
\end{equation}
\end{small}
To estimate the local time $L^r_T(X)$ and the occupation times $\cQ^{\pm,i}_T$, we use the discrete time approximations in~\eqref{def:LT:discr} and~\eqref{def:M:Q:disc}.
\begin{figure}[ht!]
\centering
	\includegraphics[scale=.4]{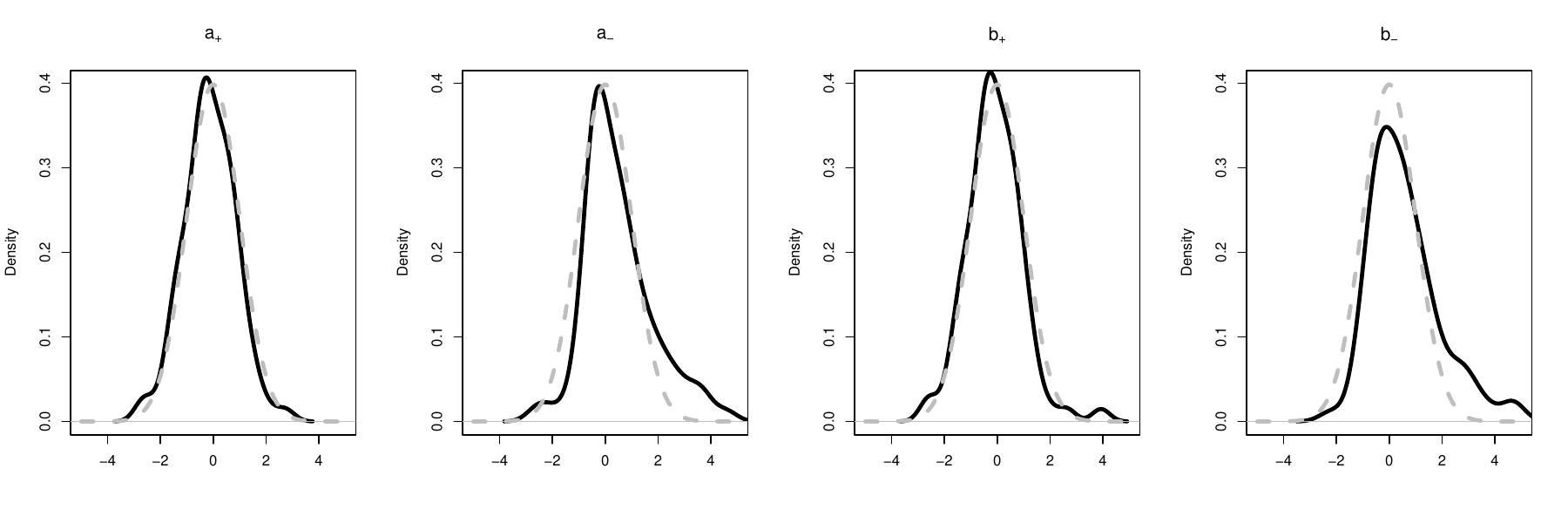}
\caption{Convergence in Theorem~\ref{th:disc}, with parameters as in Table~\ref{table:simulation_parameters}. We compare on $n=100$ trajectories the distribution of the left hand side of~\eqref{eq:CLT:Tbis}, where $T=1$ and $N=500$ discrete observations, with a standard Gaussian (dashed line).
  }
\label{fig:disc}
\end{figure}
If we want to simulate a stationary version of process \eqref{eq:AffDOBM}, we can simulate $X_0$ using explicit stationary density~\eqref{eq:stat:dist} or running the process until large time $T$ and then using the r.v.~$X_T$ as initial condition. 
In Figure~\ref{fig:invariant_density} we compare the empirical distribution obtained in this way with the theoretical stationary density. 
This constitutes an example of bi-modal stationary distribution (density) with two peaks, corresponding to the two different mean reversion levels. 
\begin{figure}[ht!]
    \centering
	\includegraphics[scale=.4]{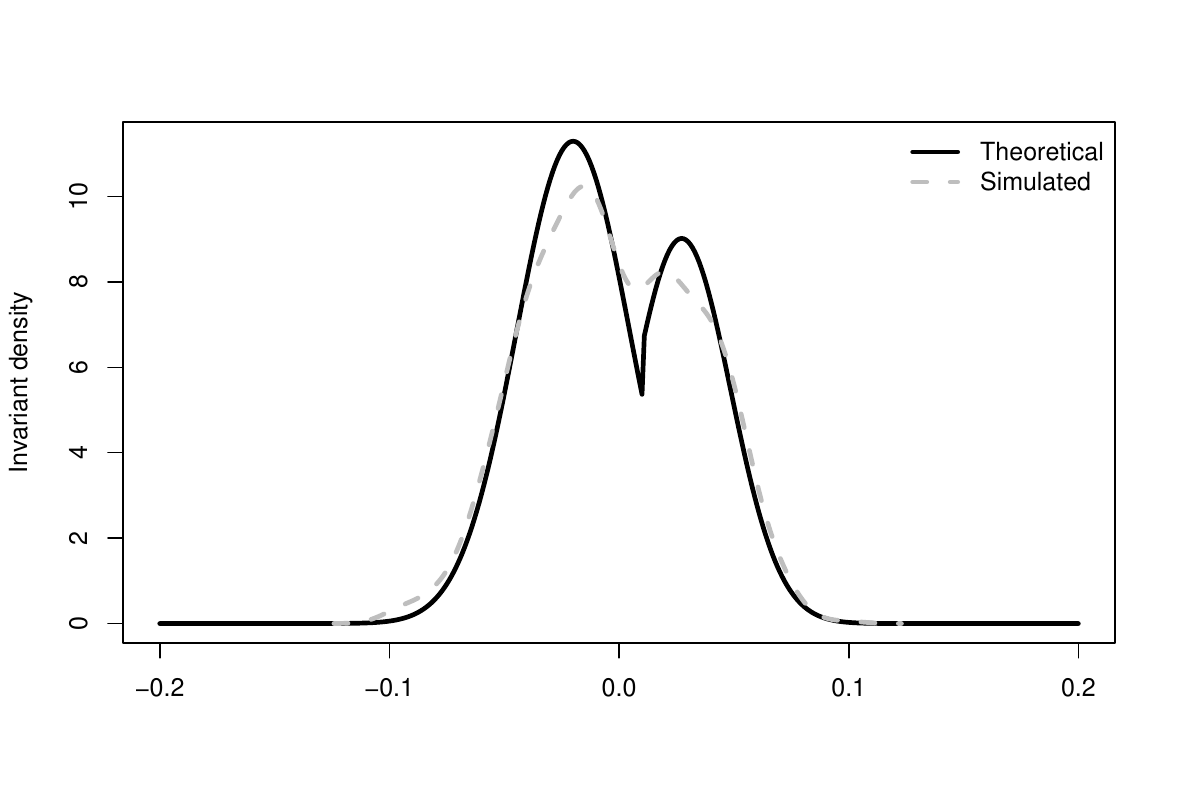}
\caption{Theoretical invariant density in \eqref{eq:stat:dist} vs empirical distribution of $X_T$, with $T=10^3$, with $N=10^6$ discretization steps in Euler scheme, on $n=10^3$ trajectories. Parameters are as in Table~\ref{table:simulation_parameters}.}
    \label{fig:invariant_density}
\end{figure}

\subsection{On threshold estimation}\label{sec:threshold}
The estimation results in Section \ref{sec:mainresults} suppose the previous knowledge of the threshold. In practice, this assumption is not realistic and the threshold $r$ has to be estimated as well.
In \citep{su2015}, threshold  QMLE from continuous observations is shown to be $T$-consistent. We implement here also the analogous threshold MLE, and we directly consider discrete observations starting from the convergence results in Theorem \ref{th:joint:CLT}.

Given $N$ discrete observations of one trajectory up to time $T_N$, we proceed as follows. 
First, for a given threshold $r$, we compute (Q)MLE  $(\widehat{a}^\pm, \widehat{b}^\pm)_{T_N,N}$, and denote this estimator $(\widehat{a}^\pm, \widehat{b}^\pm)_{T_N,N}^r$.
 For each fixed $r$, we can then compute the quasi-likelihood function $\Lambda_{T_N,N}$. We can also compute the likelihood function $G_{T_N,N}$, after estimating $\sigma_\pm$ using the quadratic variation estimators in \citep{LP}.
We take $c$ to be the $\delta$-percentile and $d$ the $1-\delta$ percentile of the observed data (in the implementation we always take $\delta=0.15$ and vary $r$ on a discrete grid). Maximizing now the (quasi-)likelihood function over $r\in[c,d]$ we obtain the (Q)MLE of the threshold, $\widehat{r}$.
The estimator of all the drift parameters is then $(\widehat{r},(\widehat{a}^\pm, \widehat{b}^\pm)_{T_N,N}^{\widehat{r}})$.

We display a sample trajectory in Figure~\ref{fig:traj}, together with the threshold estimated on that trajectory and mean reversion levels. 
\begin{figure}[ht!]
  \centering
	\includegraphics[scale=.41]{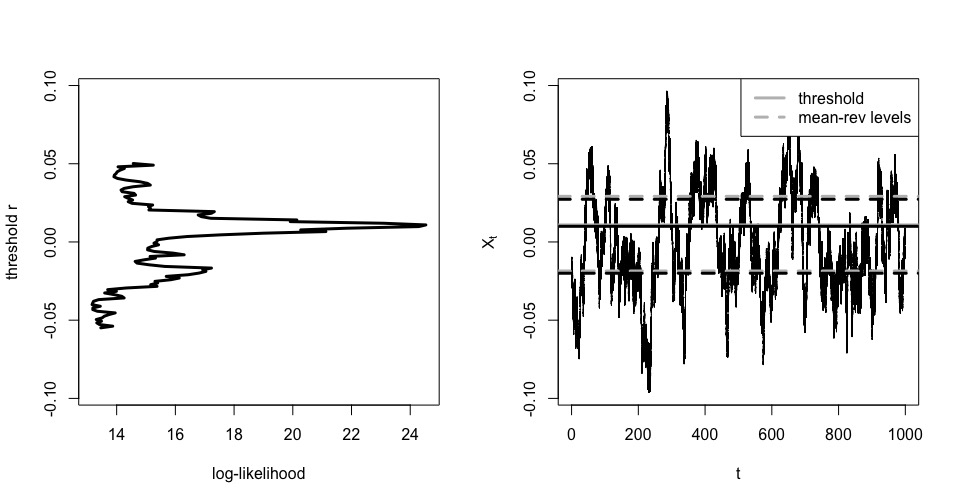}
\caption{A sample trajectory with parameters as in Table~\ref{table:simulation_parameters}, 
$T=10^3$ and $N=10^6$ time steps, and the results of estimation of both threshold and parameters, using MLE. On the left, we show the log-likelihood (on the x-axis) as a function of the threshold (on the y-axis), in order to visualize the procedure for threshold estimation described in Section \ref{sec:threshold}. On the right, we show estimated vs actual threshold level and mean reversion levels $b_-/a_-$ and $b_+/a_+$.}
    \label{fig:traj}
\end{figure}
Estimated parameters are in Table \ref{table:est:par} (cf. with simulation parameters in Table \ref{table:simulation_parameters}).
\begin{footnotesize}
\begin{table}[htp]
\centering
\begin{tabular}
{c|c|c|c|c|c|c|c}%
parameter & $r$ &
 $b_-$ & $b_+$ & 
 $a_-$ & $a_+$ &
$b_-/a_-$ & $b_+/a_+$ 
  \\\hline
(Q)MLE &
$0.0109$ &
 $-0.00222$ & $0.00403$ & 
 $0.119$ & $0.138$ &
$-0.0186$ & $0.0292$ 
  \\\hline
sd &
 &
 $0.000649$ & $0.00129$ & 
 $0.0218$ & $0.0342$ &
 &  
\end{tabular}
\caption{Estimated drift parameters corresponding to Figure~\ref{fig:traj}. Note that in this case, the threshold maximizing MLE and QMLE is the same (on the discrete grid we consider), but this is not necessarily the case. With the same threshold, also estimates for $a_\pm,b_\pm$ are the same, from Theorem~\ref{th:joint:CLT}.\eqref{th2:item:QMLE}.  
We also show the standard deviation of such estimators according to the Gaussian CLT in Theorem \ref{th:joint:CLT}.
\label{table:est:par}}
\end{table}
\end{footnotesize}
Note that MLE and QMLE give the same parameter estimates once the threshold is fixed (Theorem~\ref{th:joint:CLT}.\eqref{th2:item:QMLE}). However, when maximizing also over the choice of the threshold, the MLE can also account of a possible change in the volatility, and this may give a different choice of the threshold. {The model with different volatilities (SET Vasicek) is used by \cite{interestrate}.  \cite{su2015,su2017} use the QMLE, so their drift estimator does not account of possible changes in the volatility.}

\subsection{Testing for threshold}\label{sec:test}

We aim to test the presence of a threshold in the diffusion dynamics. \cite{su2017} propose a test for the presence of a threshold based on quasi-likelihood ratio. Here, we derive a test from the CLT in Theorem~\ref{th:joint:CLT}.\eqref{th2:item:CLT}; therefore, we suppose that its assumptions are satisfied. 
Moreover, we assume that the threshold parameter $r$ is given. In applications, a natural choice for $r$ will be the (Q)MLE $\hat{r}$. 
With fixed threshold, 
we can estimate the drift parameters obtaining 
$(\widehat{a}^\pm_{T_N,N}, \widehat{b}^\pm_{T_N,N})$. From Theorem~\ref{th:joint:CLT}.\eqref{th2:item:CLT}, if $T_N^2/N$ goes to 0 as $N\to\infty$,
\begin{equation}\label{eq:test:global}
\begin{split}&	\sqrt{T_N}
	\begin{pmatrix}
 		(\widehat{a}^+_{T_N,N} - \widehat{a}^-_{T_N,N})-(a_+- a_-) , (\widehat{b}^+_{T_N,N} - \widehat{b}^-_{T_N,N})-(b_+- b_-) 
	\end{pmatrix} \\
&	\convsl[N\to\infty] 
	N^{+} - N^{-}
	\end{split}
\end{equation} 
which is a centered Gaussian vector with covariance matrix given by $\Sigma:=\sigma_+^2 \Gamma_+^{-1} + \sigma_-^2 \Gamma_-^{-1}$,
 invertible from Cauchy-Schwarz inequality. 
The inverse matrix $\Sigma^{-1}$ can be expressed as a function of $\sigma_\pm$ and $\cQ^{\pm,i}_\infty$. 
Note that
$\sigma_\pm$ can be estimated from one observed trajectory using quadratic variation as in \citep{LP} and $\cQ^{\pm,i}_\infty$ can be estimated computing $\frac1{T_N} \cQ_{T_N,N}^{\pm,i}$ as Riemann sums on the observed trajectory, from \eqref{th:erg:covariance}. 
We denote $\widehat \Sigma^{-1}$ the estimate of $\Sigma^{-1}$ obtained from these estimations.

To test for the presence of a threshold in the drift we consider hypothesis 
\[
	\begin{cases}
		H_0 \colon \text{ Null hypothesis } 				& (a_+,b_+)=(a_-,b_-)\\
		H_1 \colon \text{ Alternative hypothesis } & (a_+,b_+)\neq(a_-,b_-).
	\end{cases}
\]
Under the null hypothesis the statistics
\[ 
	T_N  	
	\begin{pmatrix}
 		(\widehat{a}^+_{T_N,N} - \widehat{a}^-_{T_N,N}) , (\widehat{b}^+_{T_N,N} - \widehat{b}^-_{T_N,N})
	\end{pmatrix}  
	\widehat \Sigma^{-1}
	\begin{pmatrix}
 		(\widehat{a}^+_{T_N,N} - \widehat{a}^-_{T_N,N}) \\ (\widehat{b}^+_{T_N,N} - \widehat{b}^-_{T_N,N})
	\end{pmatrix}  
\]
converges to a $\chi^2$ distribution with 2 degrees of freedom.
We reject $H_0$ if the statistics is larger than $q_\alpha$, where $q_\alpha$ is the quantile of a $\chi^2$ distribution with two degrees of freedom such that $\PP(\chi^2_2 \geq q_\alpha)=\alpha$.

To conclude, note that~\eqref{eq:test:global} similarly allows to test separately the presence of a threshold in $a(\cdot)$ or in $b(\cdot)$, i.e.~testing for the presence of a discontinuity in the piecewise linear or in the piecewise constant part of the drift.

\subsection{Interest rate analysis}

We consider the 3 months US Treasury Bill rate, time series of daily closing rate on period Jan 04, 1960 - Apr 29, 2020 (source: Yahoo Finance). 
We perform quasi-maximum and maximum likelihood estimation using \eqref{eq:QMLE_dis_time}, adopting the convention that the ``daily'' time interval is $\vd t = 0.046$ months, while one month is the time unit. The number of observations is $N=15057$, whereas $T\approx 60$ years. We choose as percentile for the search of the threshold $\delta=0.15$. We report both our MLE and QMLE parameters.

We see in Figure \ref{fig:interest_rate} (bottom) that in the case of QMLE our result is consistent with the one in \citep{su2015}, so that the estimation identifies two regimes. One is low rates, with negligible drift, so that in this regime the process is almost a martingale. In the high regime, a stronger reversion to lower rates is ensured by the drift when the rates are very high. When - in Figure \ref{fig:interest_rate} (top) - we use MLE (with $\sigma_\pm$ estimated using quadratic variation), the estimation identifies a low regime corresponding to the period of extremely low rates, with minimal fluctuations, that followed the 2007-2008 financial crisis, whereas almost all the rest of the time series is in the high regime. Volatilities thus estimated are $\sigma_-=0.186$ in the low regime, $\sigma_+=0.453$ in the high regime. 
To compute standard deviations of these estimates we apply \cite[Corollary 3.8.]{LP}, in the form of \cite[Proposition 3.1]{lp1}, with the same justification as after \cite[Proposition 3.1]{lp1}. We get that the standard deviation is
$0.00472$
 for $\sigma_-^{2}$ and $0.0120$ for $\sigma_+^{2}$. 
With this MLE for the drift, the mean reverting effect looks non-negligible both above and below the threshold. 
We note that parameter estimates obtained trough MLE and QMLE are substantially different. This is due to the different choice of the threshold, that in the QMLE does not depend on the behaviour of the volatility, while the MLE is influenced by the volatility as well. For this reason, when using the MLE, one of the two regimes isolated by the threshold only consists of the period of extremely low and stable rates that followed the 2008 financial crisis.

\begin{figure}[ht!]
    \centering
	\includegraphics[scale=.5]{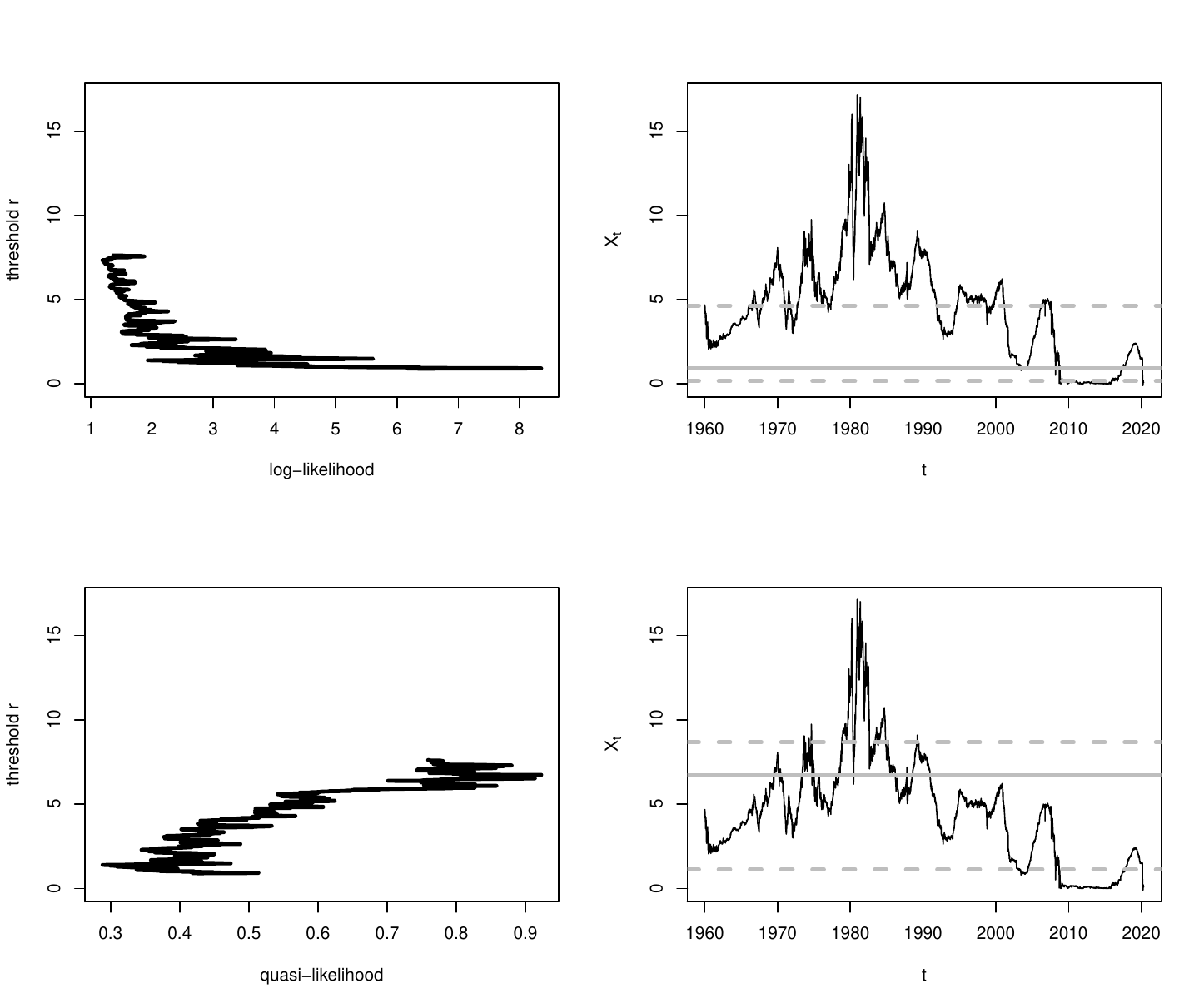}
\caption{3 months US Treasury Bill rate, time series of daily closing rate on period Jan 04, 1960 - Apr 29, 2020.  In the top figure we use the MLE, in the bottom figure  the QMLE. On the right hand side we show log-likelihood and quasi-likelihood as a function of the threshold. On the right hand side we show
estimated threshold levels (solid grey line) and mean reversion levels $b_-/a_-$ and $b_+/a_+$ (dashed grey line).}
    \label{fig:interest_rate}
\end{figure}
\begin{table}[htp]
\centering
\begin{tabular}
{c|c|c|c|c|c|c|c}%
parameter & $r$ &
 $b_-$ & $b_+$ & 
 $a_-$ & $a_+$ &
$b_-/a_-$ & $b_+/a_+$ 
  \\\hline
MLE &
$0.919$ &
 $0.0469$ & $0.0492$ & 
 $0.284$ & $0.0106$ &
$0.165$ & $4.63$ 
  \\\hline
MLE sd &
 &
 $0.0223$ & $0.0402$ & 
 $0.0757$ & $0.00672$ &
 &  
  \\\hline
QMLE &
$6.73$ &
 $0.00131$ & $0.417$ & 
 $0.00115$ & $0.0481$ &
$1.14$ & $8.67$ 
  \\\hline
QMLE sd &
 &
 $0.0341$ & $0.144$ & 
 $0.00877$ & $0.0153$ &
 & 
\end{tabular}
\caption{Estimated drift parameters corresponding to Figure~\ref{fig:interest_rate}. \label{table:estimated_parameters}} and corresponding standard deviation for $b_{\pm}$ and $a_{\pm}$ according to Theorem \ref{th:joint:CLT}
\end{table}
 



{
In this analysis, following \cite{su2015,su2017}, we estimated our model parameters on the whole period 1960-2020.  From an econometric perspective, it is natural to wonder whether it is reasonable to assume the stationarity of the process on such a long time interval.
To address this issue, we consider within the period 2010-2020 five two-years time windows, with daily observations as before. With this choice, $T^2\approx N$, and therefore we expect from Theorem \ref{th:joint:CLT} that the discretization error should be negligible, assuming that $T\approx 2$ years is large enough for the theorem to apply.

With $1\%$ significance level, only in the subperiod Jan 2018-Dec 2019 the $H_0$ hypothesis (absence of a threshold in the parameters) is not rejected. In all other time windows (2010-2011, 2012-2013,  2014-2015, 2016-2017) we conclude that a threshold is present. In Figure \ref{fig:interest_rate_window_shift} we see three examples of estimation in such windows. In order to check whether such test is reliable on time series with such sample sizes, we tried the same procedure (selection of threshold and successive test with $1\%$ significance level) on simulated time series with parameters and sample sizes of the same order as the estimated ones. We found that when there is no threshold present (constant parameters) $H_0$ is rejected $14\%$ of the times, when the threshold is present (non-constant parameters) $H_0$ is rejected $96\%$ of the times, which seems to confirm the validity of the procedure, even in these smaller time windows.
\begin{figure}[ht!]
    \centering
	\includegraphics[scale=.34]{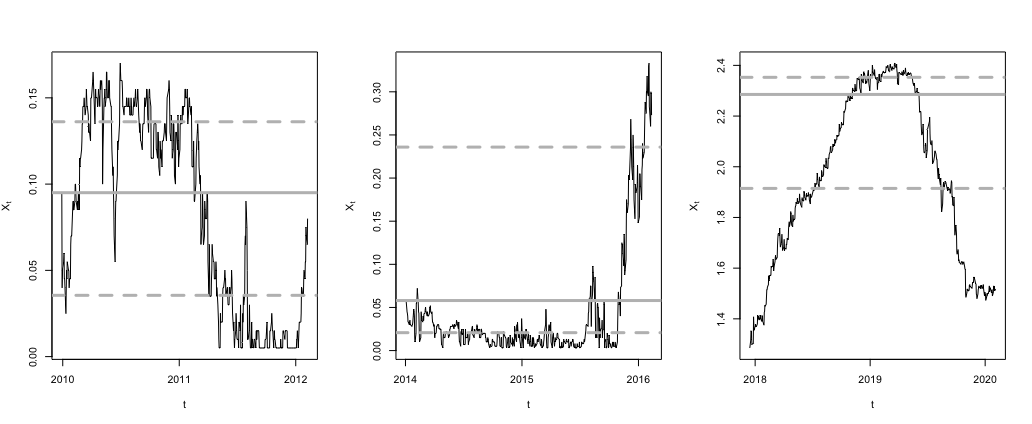}
\caption{3 months US Treasury Bill rate, time series of daily closing rate on periods Jan, 2010 - Dec 2012, Jan, 2014 - Dec 2016, Jan, 2018 - Dec 2020. Estimated threshold level  and mean reversion levels given by QMLE. Each time window consists in $24$ months, with $22$ observations per month. Our test concludes that a threshold in the dynamics is present in every time window, except Jan, 2018 - Dec 2020.
}
    \label{fig:interest_rate_window_shift}
\end{figure}
}

\appendix
\section{Proofs} \label{sec:proofs}

\subsection{The regimes of the process}

In this section, we establish for which values of the coefficients $(a_\pm,b_\pm)$ the process $X$ is (positively or null) recurrent or transient.
Since $X$ is a one-dimensional diffusion it is characterized by two quantities: {\it scale function} $S$ and {\it speed measure}.\\
$X$ is \emph{recurrent} if and only if $\lim_{x\to+\infty} S(x)=+\infty$ and $\lim_{x\to-\infty} S(x)=-\infty$, otherwise it is transient. Moreover a recurrent process is \emph{positive recurrent} if the speed measure is a finite measure, otherwise null recurrent.

The scale density is continuous, unique up to a multiplicative constant, and its derivative satisfies 
$S'(x)=\exp{\!\left( - \int_r^x \frac{2(b(y)- a(y)y)}{(\sigma(y))^2}\vd y\right)}$.
%
It 
follows that
{$X$ is recurrent} if and only if
{
[($a_+>0$ and $b_+\in \RR$) or ($a_+=0$ and  $b_+\leq 0$)] and
[($a_->0$ and $b_-\in \RR$) or ($a_-=0$ and  $b_-\geq 0$)]}.
The complementary leads to transience.

The density of the \emph{speed measure} 
with respect to the Lebesgue measure 
is given by 
$
	m(x) := \frac{2}{(\sigma(x))^2 S'(x)}.
$
It is \emph{discontinuous} if and only if $\sigma^2$ is so.
Assume $X$ is recurrent. The speed measure is a finite measure, and so {$X$ is  positive recurrent}, if and only if
\begin{equation} \label{eq:ergodic:cases}
\begin{split}
& \textrm{
[($a_+>0$ and $b_+\in \RR$) or ($a_+=0$ and  $b_+< 0$)]} 
\\
& \textrm{and [($a_->0$ and $b_-\in \RR)$ or ($a_-=0$ and  $b_-> 0$)].
}
\end{split}
\end{equation}
See Lemma~\ref{lem:ergodic} below.
In these cases, the process is actually \emph{ergodic} and the stationary distribution $\mu$ is equal to the renormalized speed measure: 
\begin{equation} \label{eq:stat:dist}
	\mu(\!\vd x) = \frac{m(x)}{\int_{-\infty}^\infty m(y) \vd y} \vd x.
\end{equation}

\begin{lemma}
\label{lem:ergodic}
Let  $\pm\in\{-,+\}$ and let 
\begin{equation} \label{eq:aux:statdistr}
	\mathfrak{m}_{\pm}:=  \frac{\sqrt{\pi}}{\sigma_\pm \sqrt{a_\pm}} 
	\exp{\!\left(\frac{a_\pm}{\sigma_\pm^2} \left(\frac{b_\pm}{a_\pm} -r \right)^{\! 2} \right)}
	\operatorname{erfc}{\!\left(\mp \frac{\sqrt{a_\pm}}{\sigma_\pm} \left(\frac{b_\pm}{a_\pm}-r\right)\right)}.
\end{equation}
Then
\[
\int_{-\infty}^\infty \ind{\{\pm(y-r) \geq 0\}} m(y) \vd y =
\begin{cases}
+\infty & \text{ if } a_\pm=0 \text{ and } b_\pm=0, \\
\frac{1}{|b_\pm|} & \text{ if } a_\pm=0 \text{ and } \mp b_\pm > 0, \\
\mathfrak{m}_{\pm} & \text{ if } a_\pm>0 \text{ and } b_\pm \in \RR.
\end{cases}
\]
\end{lemma}

\begin{lemma}  \label{eq:Qinfinity2}
Assume the process is ergodic.
Let $\mathfrak{m}_{-}, \mathfrak{m}_{+}$, given by~\eqref{eq:aux:statdistr}, 
$\mathfrak{b}_{\pm}=1/|b_\pm|$, $\pm\in \{-,+\}$, and $\mu$ be the stationary distribution.
For all $i\in \{0,1,2\}$ let 
$\overline{\cQ}^{\pm,i}_\infty$ be the constant such that
$\overline{\cQ}^{\pm,i}_\infty {\overset{\as}{=}} \lim_{t\rightarrow \infty} \frac{{\cQ}^{i,\pm}_t}{t} \in \RR$.
We have the following explicit formulas:
\begin{itemize}
\item if $a_+>0$, $a_- >0$, $b_-,b_+\in \RR$ then
{
\begin{equation*}
\begin{split}
& \overline{\cQ}^{\pm,0}_\infty = \frac{\mathfrak{m}_{\pm}}{\mathfrak{m}_{+}+\mathfrak{m}_{-}}, 
\quad  
\overline{\cQ}^{\pm,1}_\infty = \frac{1}{\mathfrak{m}_{+}+\mathfrak{m}_{-}} \left( \frac{b_\pm}{a_\pm} {\mathfrak{m}_{\pm}} \pm \frac{1}{a_\pm} \right), \quad \text{and} \\
& 
\overline{\cQ}^{\pm,2}_\infty = \frac{1}{\mathfrak{m}_{+}+\mathfrak{m}_{-}}
\left(
	 \left( \frac{b_\pm^2}{a_\pm^2} +\frac{\sigma_\pm^2}{2 a_\pm} \right) \mathfrak{m}_{\pm} 
	\pm \left(\frac{b_\pm}{a_\pm}+r\right) \frac{1}{a_\pm}\right);
\end{split}
\end{equation*}
}
\item if $a_+=0$, $a_-=0$, $b_+<0$, $b_->0$ then
{
\begin{equation*}
\begin{split}
& \overline{\cQ}^{\pm,0}_\infty = \frac{\mathfrak{b}_{\pm}}{\mathfrak{b}_{+}+\mathfrak{b}_{-}}, 
\quad  
\overline{\cQ}^{\pm,1}_\infty = \frac{{\mathfrak{b}_{\pm}}}{\mathfrak{b}_{+}+\mathfrak{b}_{-}}  \left( r  \pm \frac{\sigma_\pm^{2}}{2} {\mathfrak{b}_{\pm}} \right), \quad \text{and} \\
& 
\overline{\cQ}^{\pm,2}_\infty = \frac{{\mathfrak{b}_{\pm}}}{\mathfrak{b}_{+}+\mathfrak{b}_{-}}
\left(
	 r^2 \pm r {\sigma_\pm^{2}} {\mathfrak{b}_{\pm}} + \frac{\sigma_\pm^{4}}{2} ({\mathfrak{b}_{\pm}})^2\right);
\end{split}
\end{equation*}
}
\item $a_+>0$, $b_+\in \RR$, $a_-=0$, $b_->0$ then
{
\begin{equation*}
\begin{split}
& \overline{\cQ}^{+,0}_\infty = \frac{\mathfrak{m}_{+}}{\mathfrak{m}_{+}+\mathfrak{b}_{-}}, 
\quad 
\overline{\cQ}^{-,0}_\infty = \frac{\mathfrak{b}_{-}}{\mathfrak{m}_{+}+\mathfrak{b}_{-}}
\\
&  
\overline{\cQ}^{+,1}_\infty = \frac{1}{\mathfrak{m}_{+}+\mathfrak{b}_{-}} \left( \frac{b_+}{a_+} {\mathfrak{m}_{+}} + \frac{1}{a_+} \right), \quad 
\overline{\cQ}^{-,1}_\infty = \frac{{\mathfrak{b}_{-}}}{\mathfrak{m}_{+}+\mathfrak{b}_{-}} \left( r  - \frac{\sigma_-^{2}}{2} {\mathfrak{b}_{-}} \right), \\
& 
\overline{\cQ}^{+,2}_\infty = \frac{1}{\mathfrak{m}_{+}+\mathfrak{b}_{-}}
\left(
	 \left( \frac{b_+^2}{a_+^2} +\frac{\sigma_+^2}{2 a_+} \right) \mathfrak{m}_{+} 
	+ \left(\frac{b_+}{a_+}+r\right) \frac{1}{a_+}\right), \quad \text{and} \\
& 
\overline{\cQ}^{-,2}_\infty = \frac{{\mathfrak{b}_{-}}}{\mathfrak{m}_{+}+\mathfrak{b}_{-}}
\left(
	 r^2 - r {\sigma_-^{2}} {\mathfrak{b}_{-}} + \frac{\sigma_-^{4}}{2} ({\mathfrak{b}_{-}})^2\right);
\end{split}
\end{equation*}
}
\item $a_+=0$, $b_+<0$, $a_->0$, $b_-\in \RR$ then
{
\begin{equation*}
\begin{split}
& \overline{\cQ}^{+,0}_\infty = \frac{\mathfrak{b}_{+}}{\mathfrak{b}_{+}+\mathfrak{m}_{-}}, 
\quad 
\overline{\cQ}^{-,0}_\infty = \frac{\mathfrak{m}_{-}}{\mathfrak{b}_{+}+\mathfrak{m}_{-}}
\\
&  
\overline{\cQ}^{+,1}_\infty =  \frac{{\mathfrak{b}_{+}}}{\mathfrak{b}_{+}+\mathfrak{m}_{-}}  \left( r  + \frac{\sigma_+^{2}}{2} {\mathfrak{b}_{+}} \right), \quad 
\overline{\cQ}^{-,1}_\infty = \frac{1}{\mathfrak{b}_{+}+\mathfrak{m}_{-}} \left( \frac{b_-}{a_-} {\mathfrak{m}_{-}} - \frac{1}{a_-} \right), \\
& 
\overline{\cQ}^{+,2}_\infty = \frac{{\mathfrak{b}_{+}}}{\mathfrak{b}_{+}+\mathfrak{m}_{-}}
\left(
	 r^2 + r {\sigma_+^{2}} {\mathfrak{b}_{+}} + \frac{\sigma_+^{4}}{2} ({\mathfrak{b}_{+}})^2\right), \quad \text{and} \\
& 
\overline{\cQ}^{-,2}_\infty = \frac{1}{\mathfrak{b}_{+}+\mathfrak{m}_{-}}
\left(
	 \left( \frac{b_-^2}{a_-^2} +\frac{\sigma_-^2}{2 a_-} \right) \mathfrak{m}_{-} 
	- \left(\frac{b_-}{a_-}+r\right) \frac{1}{a_-}\right).
\end{split}
\end{equation*}
}
\end{itemize}
\end{lemma}
\subsection{Proof of Theorem~\ref{th:continuous}}

\begin{proof}[Proof of Item~\eqref{th1:item:QMLE} of Theorem~\ref{th:continuous}]
Let $\theta :=(a_+, b_+, a_-, b_- )$. It holds that
\begin{equation} \label{QMLE:eq1}
\begin{split}
	\Lambda_T (\theta) 
	& = \sum_{\pm\in \{-,+\}} \bigg( b_\pm \cM_T^{\pm,0} -
	a_\pm  \cM_T^{\pm,1} 
	-\frac{1}{2}\bigg(
	b_\pm^2 \cQ_T^{\pm,0} 	 +
	a_\pm^2 \cQ_T^{\pm,2}
	-2 a_\pm b_\pm  \cQ_T^{\pm,1}
	\bigg) \bigg) .
	\end{split}
\end{equation}
To find the maximum we compute the derivatives with respect to $a_\pm, b_\pm$ and observe that the gradient has a unique singular point given by~\eqref{eq:QMLE_ct} and the Hessian is negative definite.

Moreover the fact that $\partial_{a_\pm}  \Lambda_T=\sigma_\pm^2 \partial_{a_\pm} \log G_T$ 
and 
$\partial_{b_\pm}  \Lambda_T=\sigma_\pm^2 \partial_{b_\pm} \log G_T$
shows that the MLE for the drift parameters will be the same as the QMLE, i.e.~\eqref{eq:QMLE_ct}.
\end{proof}


In order to study the asymptotic behavior of the estimator we introduce a different expression for the estimators in \eqref{eq:QMLE_ct} based on the following notation.
Given $T\in (0,\infty)$, let 
\begin{equation*}
Q^{\pm,i}_T:= \int_0^T |X_s-r|^i \ind{\{ \pm (X_s -r) \geq 0\}}\vd s 
\quad \text{ and } \quad  
M_T^{\pm,j} :=
		\sigma_\pm \int_0^T |X_s -r|^{j-1} \ind{\{ \pm (X_s- r)\geq 0 \}} \vd W_s
\end{equation*}
with $i\in \{0,1,2\}$, $j\in \{1,2\}$. Observe that~\eqref{eq:AffDOBM} yields for $i\in \{0,1\}$:
\begin{equation} \label{eq:equality:Ms}
\begin{split}
 	\cM_T^{\pm,i}
	= r^i M^{\pm,1} \pm i M^{\pm,2} + b_\pm \cQ^{\pm,i}_T 
- a_\pm \cQ^{\pm,i+1}_T.
\end{split}
\end{equation}
Moreover,
$\cQ^{\pm,0}_T=Q^{\pm,0}_T$, 
$
	 \cQ^{\pm,1}_T = \pm \left( Q^{\pm,1}_T \pm r Q^{\pm,0}_T \right)
$, 
and 
$\cQ^{\pm,2}_T = Q^{\pm,2}_T \pm 2 r Q^{\pm,1}_T + r^2 Q^{\pm,0}_T$.

\begin{lemma} \label{lem:QMLE:ab}
Let $T\in (0,\infty)$.
The MLE and QMLE can be expressed as
\begin{equation} \label{eq:MLE:asy}	
\begin{cases}
	\alpha^\pm_T 
	= 
		a_\pm 
		\pm	  
		
		\tfrac{Q_T^{\pm,1}{M_T^{\pm,1}} - Q_T^{\pm,0} {M_T^{\pm,2}}}{Q_T^{\pm,2} Q^{\pm,0}_T - (Q^{\pm,1}_T)^2}
		\\
	\beta^\pm_T 
	 =
		b_\pm 
		+ 
		\tfrac{ (Q_T^{\pm,2} \pm r Q_T^{\pm,1}){M_T^{\pm,1}} - (Q_T^{\pm,1} \pm r Q_T^{\pm,0}) {M_T^{\pm,2}} }{Q_T^{\pm,2} Q^{\pm,0}_T - (Q^{\pm,1}_T)^2}
		\!,
	\end{cases}
\end{equation}
that can be rewritten as
\begin{equation} \label{eq:MLE:asy2}	
\begin{pmatrix}
\alpha^\pm_T  \\ \beta^\pm_T
\end{pmatrix}
= \begin{pmatrix}
	a_\pm  \\  b_\pm
\end{pmatrix}
 + 
 \begin{pmatrix}
	0 & \mp 1
	\\
	1 & \mp r
\end{pmatrix}
\begin{pmatrix}
	Q^{\pm,0}_T & Q^{\pm,1}_T
	\\
	Q^{\pm,1}_T & Q^{\pm,2}_T
\end{pmatrix}^{\! -1}
\begin{pmatrix}
	M_T^{\pm,1}  \\ M_T^{\pm,2}
\end{pmatrix}.
\end{equation}
\end{lemma}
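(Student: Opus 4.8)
The plan is to exploit that, on the random set $\{\pm(X_s-r)\geq 0\}$, the coefficients of \eqref{eq:AffDOBM} equal the constants $\sigma_\pm,b_\pm,a_\pm$, so that each stochastic integral $\cM^{\pm,m}_T$ splits cleanly into a martingale part (an $M$-quantity) and a drift part (a $\cQ$-quantity). First I would insert $\vd X_s=\sigma(X_s)\vd W_s+(b(X_s)-a(X_s)X_s)\vd s$ into the definition~\eqref{def:M:Q} of $\cM^{\pm,m}_T$. Using $|X_s-r|^0=1$ and $X_s=r\pm|X_s-r|$ on the support of the indicator to recognize the martingale terms $M_T^{\pm,1},M_T^{\pm,2}$, this yields
\begin{align*}
\cM^{\pm,0}_T&=M_T^{\pm,1}+b_\pm\cQ^{\pm,0}_T-a_\pm\cQ^{\pm,1}_T,\\
\cM^{\pm,1}_T&=\pm M_T^{\pm,2}+rM_T^{\pm,1}+b_\pm\cQ^{\pm,1}_T-a_\pm\cQ^{\pm,2}_T.
\end{align*}

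Next I would use that, by Item~\eqref{th1:item:QMLE}, the estimator is the unique zero of the gradient \eqref{gradient:log}. Substituting the two identities above into $\partial_{a_\pm}\Lambda_T=\partial_{b_\pm}\Lambda_T=0$, the contributions carrying the true $a_\pm,b_\pm$ recombine with $\alpha^\pm_T,\beta^\pm_T$; writing $\widetilde\alpha\eqdef\alpha^\pm_T-a_\pm$ and $\widetilde\beta\eqdef\beta^\pm_T-b_\pm$ the first-order conditions collapse to the linear system
\begin{equation*}
\begin{pmatrix}\cQ^{\pm,1}_T & -\cQ^{\pm,0}_T\\ -\cQ^{\pm,2}_T & \cQ^{\pm,1}_T\end{pmatrix}\begin{pmatrix}\widetilde\alpha\\ \widetilde\beta\end{pmatrix}=\begin{pmatrix}-1 & 0\\ r & \pm 1\end{pmatrix}\begin{pmatrix}M_T^{\pm,1}\\ M_T^{\pm,2}\end{pmatrix}.
\end{equation*}
The coefficient matrix has determinant $(\cQ^{\pm,1}_T)^2-\cQ^{\pm,0}_T\cQ^{\pm,2}_T$, which is $\PP$-a.s.\ nonzero by the same Cauchy--Schwarz argument already used for the Hessian in Item~\eqref{th1:item:QMLE}, so the system is uniquely solvable.

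Finally, to reach the centered form \eqref{eq:MLE:asy2} I would convert the $\cQ$-quantities into the $Q$-quantities. Expanding $X_s^m=(r\pm|X_s-r|)^m$ on the support of the indicator gives $\cQ^{\pm,0}_T=Q^{\pm,0}_T$, $\cQ^{\pm,1}_T=rQ^{\pm,0}_T\pm Q^{\pm,1}_T$ and $\cQ^{\pm,2}_T=r^2Q^{\pm,0}_T\pm 2rQ^{\pm,1}_T+Q^{\pm,2}_T$. Substituting these, one checks the matrix identity
\begin{equation*}
\begin{pmatrix}\cQ^{\pm,1}_T & -\cQ^{\pm,0}_T\\ -\cQ^{\pm,2}_T & \cQ^{\pm,1}_T\end{pmatrix}\begin{pmatrix}0 & \mp 1\\ 1 & \mp r\end{pmatrix}=\begin{pmatrix}-1 & 0\\ r & \pm 1\end{pmatrix}\begin{pmatrix}Q^{\pm,0}_T & Q^{\pm,1}_T\\ Q^{\pm,1}_T & Q^{\pm,2}_T\end{pmatrix},
\end{equation*}
which identifies $\bigl(\begin{smallmatrix}0 & \mp 1\\ 1 & \mp r\end{smallmatrix}\bigr)\,\mathbf Q^{-1}$, with $\mathbf Q$ the centered $Q$-matrix (whose determinant $Q^{\pm,0}_TQ^{\pm,2}_T-(Q^{\pm,1}_T)^2$ coincides with that of the $\cQ$-matrix), as the inverse acting on the right-hand side of the system; this is exactly \eqref{eq:MLE:asy2}. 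Expanding $\mathbf Q^{-1}$ explicitly then produces the componentwise formulas \eqref{eq:MLE:asy}. The only real difficulty here is bookkeeping: tracking the $\pm$ signs and the repeated substitutions $|X_s-r|=\pm(X_s-r)$ so that both the martingale terms and the centering identity come out with the correct signs. There is no analytic obstacle, the whole statement being an algebraic rearrangement of the first-order conditions.
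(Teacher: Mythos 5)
Your proposal is correct and takes essentially the same route as the paper: both hinge on the same two identities, namely the It\^o decomposition of $\cM^{\pm,m}_T$ into the martingale terms $M^{\pm,j}_T$ plus $\cQ$-terms (the paper's \eqref{eq:equality:Ms}) and the centering relations between $\cQ^{\pm,m}_T$ and $Q^{\pm,m}_T$ (the paper's \eqref{eq:Q:relation}), with a.s.\ invertibility from Cauchy--Schwarz. The only difference is organizational: you solve the first-order conditions as a linear system in the centered variables $(\alpha^\pm_T-a_\pm,\beta^\pm_T-b_\pm)$ and invoke a matrix identity, whereas the paper substitutes the same identities directly into the explicit formulas \eqref{eq:QMLE_ct}; the two are equivalent algebra.
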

\begin{proof}
Note that $\cQ^{\pm,0}_T \cQ^{\pm,2}_T - (\cQ^{\pm,1}_T)^2 = Q^{\pm,0}_T Q^{\pm,2}_T - (Q^{\pm,1}_T)^2$ which is $\PP$-a.s.~positive by Cauchy-Schwarz.
This and replacing the equalities~\eqref{eq:equality:Ms} in \eqref{eq:QMLE_ct} completes the proof.
\end{proof}

\begin{proof}[Proof of Item~\eqref{th1:item:LLN} of Theorem~\ref{th:continuous}] 
Follows from combining equation~\eqref{eq:MLE:asy} in Lemma~\ref{lem:QMLE:ab} with \cite[Theorem~1, p.150]{lepingle} and Lemma~\ref{eq:Qinfinity2}.
\end{proof}

\begin{proof}[Proof of Item~\eqref{th1:item:CLT} of Theorem~\ref{th:continuous}]
Follows from Lemma~2, \eqref{th:erg:covariance}, and Theorem~2.2 in \citep{crimaldi}.
\end{proof}

\begin{proof}[Proof of Item~\eqref{th1:item:LAN} of Theorem~\ref{th:continuous}] 
By~\eqref{eq:equality:Ms} it holds
\[
\begin{split}
	& \log \frac{ G_T({a}_++ \frac{1}{\sqrt{T}}\Delta a_+, {b}_++ \frac{1}{\sqrt{T}} \Delta b_+,{a}_- + \frac{1}{\sqrt{T}} \Delta a_-, {b}_- + \frac{1}{\sqrt{T}} \Delta b_-)}
	{G_T({a}_+, {b}_+,{a}_-, {b}_-)}
	\\
	& ={\scriptsize  \sum_{\pm \in \{+,-\}}  \left(
	   \frac1{\sqrt{T} \sigma_\pm} \begin{pmatrix} \Delta a_\pm \\  \Delta b_\pm \end{pmatrix}
	 \cdot A^\pm_T 	
	 - \frac{1}{2 T \sigma_\pm^2} 	 
	\begin{pmatrix} \Delta a_\pm \\  \Delta b_\pm \end{pmatrix}
 \cdot \langle A^{\pm}, A^{\pm}\rangle_T  \begin{pmatrix} \Delta a_\pm \\  \Delta b_\pm \end{pmatrix}
\right).}
\end{split}
\]
where
$ 
	A_T^{\pm}	:= \begin{pmatrix} - M^{\pm,1}_T \\ M^{\pm,0}_T\end{pmatrix}.
$
Note that
$
 \left\langle A^\pm, A^\pm \right\rangle_T = 
\begin{pmatrix} 
	 \cQ^{\pm,2}_T  
		& 
	- \cQ^{\pm,1}  _T
		\\
- \cQ^{\pm,1}  _T
			& 
	 \cQ^{\pm,0}  _T
\end{pmatrix}
$
and $\left\langle A^{+} , A^{-} \right\rangle_T=0$.
Lemma~\ref{eq:Qinfinity2} ensures that $\frac1{T} \left\langle A^\pm, A^\pm \right\rangle_T \convas[T\to\infty] \Gamma_\pm $.
This and the same argument as in the proof of Theorem~\ref{th:continuous}.\eqref{th1:item:CLT} show that
\[
	A_T := \frac1{\sqrt{T}}\begin{pmatrix} \sigma_+^{-1} A_T^{+} \\ \sigma_-^{-1} A_T^{-}\end{pmatrix}
\convl[T\to\infty] \mathcal{N}(0,\Gamma)
\]
 and
$
 \left\langle A_T, A_T \right\rangle_T 
= \frac1T {\scriptsize \begin{pmatrix}  \sigma_+^{-2} \left\langle A^{+}, A^{+}\right\rangle_T & 0 \\  0 & \sigma_-^{-2} \left\langle A^{-}, A^{-}\right\rangle_T\end{pmatrix}}
\convas[T\to\infty] \Gamma.
$
\end{proof}


\subsection{Proof of Theorem~\ref{th:joint:CLT}}

The proof of Item~\eqref{th2:item:QMLE} of Theorem~\ref{th:joint:CLT} is analogous to the proof of Item~\eqref{th1:item:QMLE} of Theorem~\ref{th:continuous}, therefore omitted.
%
%
The proof of Items~\eqref{th2:item:LLN}-\eqref{th2:item:CLT} of Theorem~\ref{th:joint:CLT} follows from Lemma~\ref{lemma:control:diff:disc:cont} below.
Let us be more precise. 
For all $N\in \NN$ it holds
\[
\begin{pmatrix}
 \widehat{a}^\pm_{T_N,N}
-a_\pm 
, \
 \widehat{b}^\pm_{T_N,N}
-b_\pm
\end{pmatrix}
=
\begin{pmatrix}
 \widehat{a}^\pm_{T_N,N}
-\alpha_{T_N}^\pm 
, \
 \widehat{b}^\pm_{T_N,N}
-\beta_{T_N}^\pm
\end{pmatrix}
+\begin{pmatrix}
\alpha_{T_N}^\pm 
-a_\pm 
, \
\beta_{T_N}^\pm-b_\pm
\end{pmatrix}.
\]
The second term of the sum is handled with Theorem~\ref{th:continuous} (more precisely Item~\eqref{th1:item:CLT}) providing the desired limit distribution.
The first instead can be rewritten, using equations~\eqref{eq:QMLE_ct} and~\eqref{eq:QMLE_dis_time}, as an expression which involves only terms of the kind
\[
\left(\frac{\cQ^{\pm,i}_{T_N,N}}{ \cQ^{\pm,0}_{T_N,N}\cQ^{\pm,2}_{T_N,N}	-(\cQ^{\pm,1}_{T_N,N})^2} 
- \frac{\cQ^{\pm,i}_{T_N}}{ \cQ^{\pm,0}_{T_N}\cQ^{\pm,2}_{T_N}	-(\cQ^{\pm,1}_{T_N})^2}  \right) \cM^{\pm,j}_{T_N}
+ \frac{\cQ^{\pm,i}_{T_N,N}  { ( \cM^{\pm,j}_{T_N,N} - \cM^{\pm,j}_{T_N} )} }{ \cQ^{\pm,0}_{T_N,N}\cQ^{\pm,2}_{T_N,N}	-(\cQ^{\pm,1}_{T_N,N})^2}
\] 
for $j \in \{0,1\}$, $i\in \{0,1,2\}$,

Combining Lemma~\ref{lemma:control:diff:disc:cont} with Lemma~\ref{eq:Qinfinity2} and 
Theorem~2.2 in \citep{crimaldi}  
ensures, the consistency of the estimator if $T_N/N \to 0$ as $N\to\infty$, and
if $T_N^2/N \to 0$ as $ N\to\infty$ then it
implies also that 
\[
	\sqrt{T_N} \left(\widehat{a}^\pm_{T_N,N} -\alpha^\pm_{T_N} , \ \widehat{b}^\pm_{T_N,N} -\beta^\pm_{T_N}\right) \convp[N\to\infty] 0.
\]

\begin{lemma}\label{lemma:control:diff:disc:cont}
{Assume the process is ergodic. }
Let $X$ be the solution to \eqref{eq:AffDOBM}, with $X_0 $ distributed as the stationary distribution $\mu$ in~\eqref{eq:stat:dist}, let $\lambda \in \{1,2\}$ be fixed,
and let $(T_N)_{N\in \NN} \subset (0,\infty)$ be a sequence satisfying, as $N \to\infty$, that $T_N\to \infty$ and 
$\lim_{N\to\infty} T_N^{1-1/\lambda} \sqrt{\Delta_N} = 0$ 
where $\Delta_N:=\sup_{k=1,\ldots,N} (t_{k}-t_{k-1})$.
Then for all $m\in \{0,1,2\}$, $j\in\{0,1\}$ it holds
\begin{equation*}
\limsup_{N\to\infty} {T_N^{-\nicefrac1{\lambda}}}
 \EE\left[ | \cQ^{\pm,m}_{T_N} - \cQ^{\pm,m}_{T_N,N} |\right] = 0
 \text{ and } 
\limsup_{N\to\infty} {T_N^{-\nicefrac1{\lambda}}} \EE\left[ |\cM^{\pm,j}_{T_N} - \cM^{\pm,j}_{T_N,N} | \right] = 0
\end{equation*}
where $\cQ^{\pm,m}_{T_N}$, $\cQ^{\pm,m}_{T_N,N}$,
$\cM^{\pm,j}_{T_N}$, $\cM^{\pm,j}_{T_N,N}$ are defined in~\eqref{def:M:Q} and \eqref{def:M:Q:disc}.
\end{lemma}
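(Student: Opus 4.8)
The plan is to write each difference as a sum over the grid intervals $[t_k,t_{k+1}]$, $t_k=k\Delta_N$, and to split the increment of the integrand into a \emph{smooth} part and a \emph{threshold-crossing} part. Setting $g_m(x)=x^m\ind{\{\pm(x-r)\geq 0\}}$, one has
\begin{equation*}
\cQ^{\pm,m}_{T_N}-\cQ^{\pm,m}_{T_N,N}=\sum_{k=0}^{N-1}\int_{t_k}^{t_{k+1}}\big(g_m(X_s)-g_m(X_k)\big)\vd s,
\end{equation*}
while, decomposing $\vd X_s=\sigma(X_s)\vd W_s+(b(X_s)-a(X_s)X_s)\vd s$,
\begin{equation*}
\cM^{\pm,j}_{T_N}-\cM^{\pm,j}_{T_N,N}=\sum_{k=0}^{N-1}\int_{t_k}^{t_{k+1}}\big(g_j(X_s)-g_j(X_k)\big)\vd X_s.
\end{equation*}
The drift part of the second display is of the same type as the first (up to an affine factor), and its martingale part is a genuine $(\cF_t)$-martingale, because $g_j(X_k)$ is $\cF_{t_k}$-measurable; hence the It\^{o} isometry applies to it.

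On the event that $X_s$ and $X_k$ lie on the same side of $r$ one has $g_m(X_s)-g_m(X_k)=X_s^m-X_k^m$. I would bound this smooth part using the elementary estimate $\EE[|X_s-X_k|^p]\leq C_p\Delta_N^{p/2}$ for $s\in[t_k,t_{k+1}]$, together with the uniform moment bounds $\sup_{t\geq 0}\EE[|X_t|^q]<\infty$ that follow from stationarity and the Gaussian (resp.~exponential) tails of $\mu$ recorded in Lemma~\ref{lem:ergodic}. This gives $\EE[|X_s^m-X_k^m|]\leq C\sqrt{\Delta_N}$, hence a smooth contribution to $\EE[|\cQ^{\pm,m}_{T_N}-\cQ^{\pm,m}_{T_N,N}|]$ of order $N\Delta_N\sqrt{\Delta_N}=T_N\sqrt{T_N/N}$; for the martingale part of $\cM$ the isometry yields an $L^2$-norm of order $(N\Delta_N^2)^{1/2}=T_N N^{-1/2}$.

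The threshold-crossing part is the heart of the matter: here the discontinuity of $g_m$ at $r$ brings the local time into the estimate. On the event that $X_s$ and $X_k$ lie on opposite sides of $r$ one has $|X_s-r|\leq|X_s-X_k|$, so the crossing term is supported in a band around $r$ whose width shrinks with $\Delta_N$; by the occupation-time formula the time the stationary process spends in a band of width $\delta$ around $r$ is of order $\delta\,L^r_{T_N}(X)$, and $\EE[L^r_{T_N}(X)]$ grows linearly in $T_N$ by ergodicity. This is precisely the type of control quantified in \citep{mazzonetto2019rates}, and it produces a crossing contribution of order $\sqrt{\Delta_N}\,T_N=T_N^{3/2}N^{-1/2}$ for $\cQ$ and an $L^2$-contribution of order $(\sqrt{\Delta_N}\,T_N)^{1/2}=T_N^{3/4}N^{-1/4}$ for the martingale part of $\cM$. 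I expect this to be the main obstacle: one must make the band-width bound uniform over the grid, transfer the discrete occupation to the local time with the correct $\sqrt{\Delta_N}$ rate, and propagate the moment bound for $L^r_{T_N}(X)$ through the stationary regime.

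Collecting the estimates yields $\EE[|\cQ^{\pm,m}_{T_N}-\cQ^{\pm,m}_{T_N,N}|]\leq C\,T_N^{3/2}N^{-1/2}$ and $\EE[|\cM^{\pm,j}_{T_N}-\cM^{\pm,j}_{T_N,N}|]\leq C\,(T_N N^{-1/2}+T_N^{3/4}N^{-1/4})$. Multiplying by $T_N^{-1/\lambda}$, the $\cQ$ bound becomes $T_N^{3/2-1/\lambda}N^{-1/2}$, which equals $(T_N/N)^{1/2}$ when $\lambda=1$ and $(T_N^2/N)^{1/2}$ when $\lambda=2$; the $\cM$ bound becomes $N^{-1/2}+T_N^{-1/4}N^{-1/4}$ when $\lambda=1$ and $(T_N/N)^{1/2}+(T_N/N)^{1/4}$ when $\lambda=2$. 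In all cases the hypothesis $T_N^\lambda/N\to 0$ (which for $\lambda=2$ also forces $T_N/N\to 0$) makes these vanish, which is the claim.
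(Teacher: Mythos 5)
Your overall architecture matches the paper's proof: the same splitting of each difference into a \emph{smooth} increment part and a \emph{threshold-crossing} part, the same reduction of the martingale part via It\^o isometry, and the same treatment of the smooth part through conditional increment bounds of order $\Delta_N^{1/2}$ combined with uniform stationary moments (the paper reduces the lemma to exactly the two estimates \eqref{item:cond:2} and \eqref{item:cond:1}, proved in its Steps 1--3). The gap is in the crossing term, which you yourself flag as ``the main obstacle'' but then only sketch. Your band argument bounds the crossing event by the event that $X_{\lfloor t \rfloor_{\Delta_N}}$ lies in a band around $r$ whose width is the \emph{random} increment $|X_t-X_{\lfloor t \rfloor_{\Delta_N}}|$. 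The natural way to make the width deterministic --- split on $\{|X_{\lfloor t \rfloor_{\Delta_N}}-r|\leq\delta\}\cup\{|X_t-X_{\lfloor t \rfloor_{\Delta_N}}|>\delta\}$, use the bounded stationary density for the first event and sub-Gaussian increments for the second --- forces $\delta$ of order $\sqrt{\Delta_N\log(1/\Delta_N)}$, hence a crossing contribution $C\,T_N\sqrt{\Delta_N\log(1/\Delta_N)}$ rather than your claimed $C\,T_N\sqrt{\Delta_N}$. This logarithmic loss is fatal precisely in the case $\lambda=2$: taking $T_N=\sqrt{N/\log N}$ one has $T_N^2/N\to 0$, yet $T_N^{-1/2}\, T_N\sqrt{\Delta_N \log(1/\Delta_N)}=\sqrt{(T_N^2/N)\log(N/T_N)}$ stays bounded away from $0$, so the lemma's conclusion would not follow from the estimate you can actually justify. (A smaller slip: your collected bound for $\cM$ omits the drift contribution of order $T_N^{3/2}N^{-1/2}$, though that term is harmless since it equals your $\cQ$ bound.)

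The paper avoids the log loss by never decoupling the band width from the conditioning. In its Step 3 it bounds $\PP(X_{\lfloor t \rfloor_{\Delta_N}}X_t<0 \mid X_{\lfloor t \rfloor_{\Delta_N}})$ by the probability that an OU process (or a Brownian motion with drift, when $a_\pm=0$) started at $X_{\lfloor t \rfloor_{\Delta_N}}$ hits the threshold before time $T_N/N$; explicit first-hitting-time laws give a Gaussian-type tail of the form $\Phi\bigl(-c\,|X_{\lfloor t \rfloor_{\Delta_N}}-r|\sqrt{N/T_N}\bigr)$ (up to harmless exponential factors), and integrating this tail against the bounded stationary density $\mu$ produces the clean factor $\sqrt{T_N/N}$ with no logarithm, i.e.\ a crossing contribution $C\,T_N^{3/2}N^{-1/2}$ that vanishes after division by $T_N^{1/\lambda}$ under $T_N^{\lambda}/N\to 0$. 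Your occupation-time/local-time heuristic identifies the right order of magnitude, but to close the proof you must carry out this conditional computation --- keep the crossing probability as a function of $X_{\lfloor t \rfloor_{\Delta_N}}$ and integrate it against $\mu$ --- rather than replace it by a fixed-width band estimate.
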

\begin{proof}
Without loss of generality, we reduce to prove the statement for
threshold $r=0$. Indeed the quantities $\cQ^{\pm,m}_{T_N} - \cQ^{\pm,m}_{T_N,N}$ and $\cM^{\pm,m}_{T_N} - \cM^{\pm,m}_{T_N,N}$ for the process $X$ (with threshold $r$) can be written as linear combination (coefficients depending on $m$ and $r$) of the same quantities for the process $X-r$ 
(which solves~\eqref{eq:AffDOBM} with threshold at $0$ and new drift coefficients $b_\pm - a_\pm r$ and $a_\pm$).
We keep denoting as $b_\pm$ (instead of $b_\pm -a_\pm r$) and $a_\pm$ the drift coefficients.
In this proof we use the round ground notation 
$\lfloor t \rfloor_{\Delta_N} := t_k$ for $t \in [t_k,t_{k+1}) \subseteq [t_k, t_k+\Delta_N]$.
Moreover, without loss of generality, we assume $T_N \leq N$ for all $N\in \NN$.

Let us first note that 
for $m=0,1,2$:
\[
\begin{split}
 &\cQ^{\pm,m}_{T_N} - \cQ^{\pm,m}_{T_N,N}
	 = - \sum_{k=1}^{N} J_{k,N}^{(m)}
	\\
	& = \mp \int_0^{T_N} \sgn(X_{\lfloor t \rfloor_{\Delta_N}}) X_{\lfloor t \rfloor_{\Delta_N}}^m 
\ind{\{X_{\lfloor t \rfloor_{\Delta_N}} X_t<0\}} \vd t
+
\int_{0}^{T_N} (X_t^m - X_{\lfloor t \rfloor_{\Delta_N}}^m) \ind{\{ \pm X_t > 0\}} \vd t
\end{split}
\]
therefore
 \[
 \EE\left[|\cQ^{\pm,m}_{T_N} - \cQ^{\pm,m}_{T_N,N}|\right]\\ 
\leq \int_0^{T_N} \EE\left[ |X_{\lfloor t \rfloor_{\Delta_N}}|^m
\ind{\{X_{\lfloor t \rfloor_{\Delta_N}} X_t<0\}} \right]+ \EE\left[ |X_t^m - X_{\lfloor t \rfloor_{\Delta_N}}^m| \right] \vd t.
\] 
%
%
%
%
%
Analogously, observe that for $m=0,1$ it holds
\[\begin{split}
	\cM^{\pm,m}_{T_N} - \cM^{\pm,m}_{T_N,N}  
	& = \int_0^{T_N}  (X_t^m \ind{\{\pm X_t > 0\}} - X_{\lfloor t \rfloor_{\Delta_N}}^m \ind{\{\pm X_{\lfloor t \rfloor_{\Delta_N}}> 0\}}) (b(X_t)- a(X_t )X_t) \vd t
	\\ 
	& \quad  
	+ \int_0^{T_N}  (X_t^m \ind{\{\pm X_t > 0\}}  - X_{\lfloor t \rfloor_{\Delta_N}}^m \ind{\{\pm X_{\lfloor t \rfloor_{\Delta_N}} > 0\}} ) \sigma(X_t) \vd W_t.
\end{split}
\]
Let us rewrite the integrand as
\[
\begin{split}
	 &X_t^m \ind{\{\pm X_t > 0\}} - X_{\lfloor t \rfloor_{\Delta_N}}^m \ind{\{\pm X_{\lfloor t \rfloor_{\Delta_N}}> 0\}}\\
	& =  (X_t^m - X_{\lfloor t \rfloor_{\Delta_N}}^m) \ind{\{\pm X_t > 0\}} 
- \sgn(X_{\lfloor t \rfloor_{\Delta_N}}) X_{\lfloor t \rfloor_{\Delta_N}}^m \ind{ \{ X_t X_{\lfloor t \rfloor_{\Delta_N}} < 0 \}}.
\end{split}
\]
%
Triangular inequality, H\"older's inequality, and It\^o-isometry imply that
\begin{equation*} 
\begin{split}
	& \EE\left[ |\cM^{\pm,m}_{T_N} - \cM^{\pm,m}_{T_N,N} | \right] \\
	&
	\leq   \int_0^{T_N} \EE\left[ |X_t^m - X_{\lfloor t \rfloor_{\Delta_N}}^m|  (|b_\pm| + a_\pm  |X_{\lfloor t \rfloor_{\Delta_N}} | +  a_\pm |X_t -  X_{\lfloor t \rfloor_{\Delta_N}}|)\right] \vd t
	\\
	& + \int_0^{T_N} \EE\Big[ |X_{\lfloor t \rfloor_{\Delta_N}}|^m \ind{\{X_t X_{\lfloor t \rfloor_{\Delta_N}}< 0\}} ( |b_-| \vee |b_+| +(a_- \vee a_+) \\
&\quad\times	
	( |X_{\lfloor t \rfloor_{\Delta_N}}| + |X_t - X_{\lfloor t \rfloor_{\Delta_N}} | ))\Big] \vd t
	\\ 
	& 
	+ 
	\sqrt 2 (\sigma_-\vee \sigma_+) \left(\int_0^{T_N}  \EE\left[  ( X_t^m - X^m_{\lfloor t \rfloor_{\Delta_N}} )^2 +  X^{2 m}_{\lfloor t \rfloor_{\Delta_N}} \ind{\{X_t X_{\lfloor t \rfloor_{\Delta_N}}< 0\}} \right] \vd t\right)^{\!\nicefrac12}.
\end{split}
\end{equation*}

Hence, the proof of Lemma~\ref{lemma:control:diff:disc:cont}, reduces to prove two inequalities:
\begin{equation} \label{item:cond:2}
\int_{0}^{T_N} \EE\left[ |X_t - X_{\lfloor t \rfloor_{\Delta_N}}|^{j} |X_{\lfloor t \rfloor_{\Delta_N}}|^{m}  \right] \!\vd t 
\text{ is } o(T_N^{\nicefrac1\lambda}) 
\end{equation}
for all $j\in \{1,2,4\}, m\in \{0,1,2\}$, and
\begin{equation} \label{item:cond:1}
	\int_0^{T_N} \EE\left[ |X_{\lfloor t \rfloor_{\Delta_N}}|^m \ind{\{X_{\lfloor t \rfloor_{\Delta_N}} X_t<0\}} \right] \!\vd t 
\text{ is } o(T_N^{\nicefrac1\lambda}) \text{ for } m\in \{0,1,2,3,4\}.
\end{equation}

{\it Step 1}. Given $s\in [0,\infty)$ and 
$t\in [0, \Delta_N]$
we show that for every $j\in\{1,2,4\}$ there exists a constant $C\in (0,\infty)$ depending only on $j, a_\pm, b_\pm, \sigma_\pm$ such that
\begin{equation} \label{th:joint:step3}
\EE\left[ | X_{t+s} -  X_{s} |^j  |  X_{s} \right] \leq C t^{j/2} (1+|X_s|^j).
\end{equation}

Let $\xi_t :=X_{t+s} -X_s$ then 
\[\xi_t = \int_0^t (b(\xi_u+X_s) - a(\xi_u+X_s) X_s) - a(\xi_u+X_s) \xi_u \vd u + \int_0^t \sigma(\xi_u+X_s)  \vd W_u^s \] 
where $W^s$ a Wiener process independent of $\sigma(X_u, u\in [0,s])$.
So, given $X_s$, $\xi$ is an OU with threshold $-X_s$ (since $X$ has threshold 0).
Now, e.g.~\cite[Corollary~2.5]{HuddeHutzenthalerMazzonetto} applied to $\xi$ 
implies~\eqref{th:joint:step3}.

{\it Step 2}. (Proof of~\eqref{item:cond:2}).
Since $X_0$ is distributed as the stationary distribution $\mu$ then 
$ 
	\sup_{u\in [0,\infty)} \EE\!\left[  |X_u|^m \right] 
	= \EE\!\left[ |X_0|^m\right] = \int_{-\infty}^{\infty} |x|^m \mu(\!\vd x)
<\infty.
$
This, the tower property, and \eqref{th:joint:step3} 
imply that there exists $C\in (0,\infty)$ depending only on $m,j, a_\pm, b_\pm, \sigma_\pm$ such that
\[\begin{split}
	&\frac{1}{T_N^{\nicefrac1\lambda}}\int_{0}^{T_N} \EE\left[ |X_t - X_{\lfloor t \rfloor_{\Delta_N}}|^{j} |X_{\lfloor t \rfloor_{\Delta_N}}|^{m}  \right] \!\vd t 
\\
	& \leq C \sqrt{\Delta_N^j T_N^{2(1-\lambda^{-1})}} = C \sqrt{\Delta_N^j T_N^{(\lambda-1)}} \xrightarrow[N\to\infty]{} 0
\end{split}\]

{\it Step 3}.  (Proof of~\eqref{item:cond:1}).
Let $s,t\in [0,\infty)$ be fixed such that $t-s\in [0,\Delta_{N}]$. 
Let us first note that we just need to consider $\EE\left[   \ind{\{ \pm X_t<0\}} \ind{\{\pm X_{s} > 0\}}  |  X_{s} \right]$.
This, given $X_s$, is bounded by
$ \PP\left( \tau_{s,\pm} \leq t-s \right) \leq \PP\left( \tau_{s,\pm} \leq \Delta_N \right) $ where $\tau_s$ is the first hitting time of the level 0 of the OU process solution to the following SDE: 
$
	\xi_u = X_s + \int_0^{u} b_\pm -  a_\pm \xi_v \vd v +  \sigma_\pm  W_u^s  
$ 
with $W^s$ a Brownian motion independent of $\sigma(X_v, v\in [0,s])$.
If $a_\pm \neq 0$, \cite[Section 6.2.1]{Lipton_Kaushansky_2018} with 
	$b =-\frac{b_\pm}{\sqrt{a_\pm} \sigma_\pm}$, 
	$z= \frac{\sqrt{a_\pm}}{\sigma_\pm} X_s -\frac{b_\pm}{\sqrt{a_\pm} \sigma_\pm}$, 
	and $t= {a_\pm} \Delta_N$,
prove that
\[
	\PP\left( \tau_{s,\pm} \leq \Delta_N \right)  = 2 e^{- \frac{b_\pm}{\sigma_\pm^2} X_s} \Phi\!\left(- \frac{\sqrt{a_\pm}}{\sigma_\pm} |X_s| \gamma_N\right)
\text{ with } \gamma_N:= \frac{e^{-\frac{a_\pm \Delta_N}{2} }}{\sqrt{\sinh(a_\pm \Delta_N)}}.
\]
If $a_\pm =0$ and $\pm b_\pm<0$ then 
\[
\begin{split}
	\PP\left( \tau_{s,\pm} \leq \Delta_N \right)  
&= 
\int_0^{\Delta_N} \frac{|X_s|}{\sigma_\pm\sqrt{2\pi u^3}} 
\exp\left(
-\frac{(X_s-b_\pm u)^2}{2\sigma_\pm^2 u}
\right)	
\vd u	
\\
& \leq \left(1+e^{\frac{2 |X_s| |b_\pm|}{\sigma_\pm^2}}\right) \Phi\!\left(- \frac{|X_s| - |b_\pm| \Delta_N}{ \sqrt{2 \sigma_\pm^2 \Delta_N } }\right) .
\end{split}
\]
Therefore, using the stationary distribution~\eqref{eq:stat:dist}, to establish Step~3 it suffices to prove that the following quantity vanishes as $N\to\infty$:
\[
\frac{1}{T_N^{\nicefrac1\lambda}} \int_0^{T_N} \EE\!\left[ |X_{\lfloor t \rfloor_{\Delta_N}}|^m \PP\left( \tau_{\lfloor t \rfloor_{\Delta_N},\pm} \leq \Delta_N \right) 
\right]  
 \! \vd t.
\]
Let us first consider the case
$a_\pm =0$ and $\pm b_\pm <0$.
The desired quantity is bounded by
\begin{footnotesize}
\[ 
\begin{split}
	& \frac{T_N}{T_N^{\nicefrac1\lambda}}  \frac{1}{\int_{-\infty}^{\infty} m(y) \vd y} \int_{\RR^\pm} \frac{2 |y|^m}{(\sigma_\pm)^2} \exp\!\left( \frac{2  y  b_\pm }{(\sigma_\pm)^2} \right)
 \left(1+e^{\frac{2 |y| |b_\pm|}{\sigma_\pm^2}}\right) \Phi\!\left(- \frac{|y| - |b_\pm| \Delta_N}{ \sqrt{2 \sigma_\pm^2 \Delta_N} }\right) \! \vd y
	\\
	& \leq C_1 \frac{T_N}{T_N^{\nicefrac1\lambda}}  \frac{1}{\int_{-\infty}^{\infty} m(y) \vd y} \int_{\RR^\pm}  \exp\!\left(- C_2  |y|^2/\Delta_N \right) \! \vd y
\leq C_3 T_N^{1-\frac1\lambda} \sqrt{\Delta_N} 
\xrightarrow[N\to\infty]{} 0
\end{split}
\]
\end{footnotesize}
for constants $C_1,C_2,C_3\in (0,\infty)$ depending on $a_\pm,b_\pm,\sigma_\pm$.
Let us now consider the case $a_\pm>0$ and $b_\pm \in \RR$. 
The desired quantity is bounded by
\begin{small}
\[ 
\begin{split}
&	\frac{T_N}{T_N^{\nicefrac1\lambda}}  \frac{1}{\int_{-\infty}^{\infty} m(y) \vd y} \int_{\RR^\pm} \frac{2 |y|^m}{(\sigma_\pm)^2} \exp\!\left( - \frac{ y (a_\pm y- 2 b_\pm) }{(\sigma_\pm)^2} \right)
  2 e^{- \frac{b_\pm}{\sigma_\pm^2} y} \Phi\!\left(- \frac{\sqrt{a_\pm}}{\sigma_\pm} |y| \gamma_N\right) \! \vd y
	\\& 
\leq C_1 \frac{T_N}{T_N^{\nicefrac1\lambda}}  \frac{1}{\int_{-\infty}^{\infty} m(y) \vd y} \int_{\RR^\pm}  \exp\!\left(- C_2  |y|^2 \gamma_N^2\right) \! \vd y
\leq C_3 \frac{T_N^{1-\frac1\lambda}}{\gamma_N} 
\end{split}
\]
\end{small}
for constants $C_1,C_2,C_3\in (0,\infty)$ depending on $a_\pm,b_\pm,\sigma_\pm$.
The latter term vanishes since 
$ \lim_{N\to\infty} \frac{T_N^{1-\frac1\lambda}}{\gamma_N} \leq \lim_{N\to\infty}T_N^{1-\frac1\lambda}\sqrt{\Delta_{N}}=0.
$
The proof is thus completed.
\end{proof}

\begin{proof}[Proof of Item~\eqref{th2:item:LAN} of Theorem~\ref{th:joint:CLT}]
Similarly to Item~\eqref{th1:item:LAN} of Theorem~\ref{th:continuous}.
The analogous of $A^\pm_T$ is
$
	A_{T_N}^{N,\pm}:= \begin{pmatrix}
	-\cM_{T_N,N}^{\pm,1} 
	-
	\frak a_\pm \cQ^{\pm,2}_{T_N,N}
	+ \frak b_\pm \cQ^{\pm,1}_{T_N,N}
, &
	\cM_{T_N,N}^{\pm,0} 
	-
	\frak b_\pm \cQ_{T_N,N}^{\pm,0} 
	+ \frak a_\pm \cQ_{T_N,N}^{\pm,1} 
	\end{pmatrix}
$
and Lemma~\ref{lemma:control:diff:disc:cont} ensures that the asymptotic behavior of the latter quantity is the same as the one of the continuous time analogue, Theorem~\ref{th:joint:CLT}.\eqref{th1:item:LAN}.
\end{proof}

\subsection{Proof of Theorem~\ref{th:disc}}\label{secbig:proofth:discrete:LLNCLT}

The proof of Item~\eqref{th3:item:QMLE} of Theorem~\ref{th:disc} is along the lines of the one of Theorem~\ref{th:joint:CLT}.\eqref{th2:item:QMLE}.
\\
The proof of Item~\eqref{th3:item:LLN} of Theorem~\ref{th:disc} is based on Lemma~\ref{lemma:conv:M} and  Lemma~\ref{lem:conv:occ:time} below.
\\
Let us be more precise. 
For all $T\in (0,\infty)$ and $N\in \NN$ the difference
$
\begin{pmatrix}
 \widehat{a}^\pm_{T,N}
-\alpha_{T}^\pm, 
&
 \widehat{b}^\pm_{T,N}
-\beta_{T}^\pm
\end{pmatrix}
$
can be rewritten, using Theorem~\ref{th:continuous}.\eqref{th1:item:QMLE} and Theorem~\ref{th:disc}.\eqref{th3:item:QMLE}, as an expression involving only terms of the kind
\[
\left(\frac{\cQ^{\pm,i}_{T,N}}{ \cQ^{\pm,0}_{T,N}\cQ^{\pm,2}_{T,N}	-(\cQ^{\pm,1}_{T,N})^2} 
- \frac{\cQ^{\pm,i}_{T}}{ \cQ^{\pm,0}_{T}\cQ^{\pm,2}_{T}	-(\cQ^{\pm,1}_{T})^2}  \right) \cM^{\pm,j}_{T}
+ \frac{\cQ^{\pm,i}_{T,N}  { ( \cM^{\pm,j}_{T,N} - \cM^{\pm,j}_{T} )} }{ \cQ^{\pm,0}_{T,N}\cQ^{\pm,2}_{T,N}	-(\cQ^{\pm,1}_{T,N})^2}
\] 
for $j \in \{0,1\}$, $i\in \{0,1,2\}$.
The convergence 
$(
    \widehat{a}^+_{T,N},\widehat{b}^+_{T,N}
    ,\widehat{a}^-_{T,N},\widehat{b}^-_{T,N} )
\convp[N\to\infty]
   ( \alpha^+_{T},\beta^+_{T}
    ,\alpha^-_{T},\beta^-_{T} ) 
$
is obtained combining Lemma~\ref{eq:Qinfinity2} and 
Theorem~2.2 in \citep{crimaldi}  
with the convergences in probability 
in Lemma~\ref{lemma:conv:M} and Lemma~\ref{lem:conv:occ:time} below.
The proof of~\eqref{eq:CLT:T} relies on Lemma~\ref{eq:Qinfinity2}, 
Theorem~2.2 in \citep{crimaldi}  
Lemma \ref{lem:conv:occ:time} and equation \eqref{lemma:conv:M0} in Lemma~\ref{lemma:conv:M}.

\begin{lemma}\label{lemma:conv:M}
Let $m=0,1$.
Then
$
    \cM^{\pm,m}_{T,N}
     \convp[{N\to\infty}]
    \cM^{\pm,m}_{T}
$
and
\begin{equation} \label{lemma:conv:M0}
  N^{1/4} ( \cM^{\pm,m}_{T,N}
    -\cM^{\pm,m}_{T}) \xrightarrow[N\to \infty]{\mathrm{stably}} \pm {r^m} \sqrt{\frac{4 \sqrt{T} }{3 \sqrt{2 \pi}}  \frac{\sigma_-^2 +\sigma_+^2}{\sigma_-+\sigma_+}} B_{L^r_T(X)}
\end{equation}
where $B$ is a Brownian motion independent of $X$.
\end{lemma}

\begin{lemma} \label{lem:conv:occ:time}
Let $m\in \NN$.
Then
$
	\sqrt{N}(    \cQ^{\pm,m}_{T,N} - \cQ^{\pm,m}_{T})  
	\convp[N\to\infty] 0.
$\\
Moreover, if the threshold $r=0$, then 
$
	{\sqrt{N^{1+m\varepsilon}}} (    \cQ^{\pm,m}_{T,N} - \cQ^{\pm,m}_{T}) 
	\convp[N\to\infty] 0
$
for every $\varepsilon <1$. 
\end{lemma}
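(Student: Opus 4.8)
The plan is to extend the $m=0$ occupation–time analysis of \cite[Theorem~4.14]{LP} by inserting the polynomial weight $X^m$ and by separating the error produced \emph{away} from the threshold from the error produced \emph{by} threshold crossings. Since, by the reduction at the beginning of Section~\ref{secbig:proofth:discrete:LLNCLT}, I may take $X$ to be an OBM with deterministic starting point and $T$ fixed, I would write, with $t_k := k\tfrac{T}{N}$ and the shorthand $I_s := \ind{\{\pm(X_s-r)\geq 0\}}$,
\[
\cQ^{\pm,m}_{T,N} - \cQ^{\pm,m}_T
= \sum_{k=0}^{N-1}\int_{t_k}^{t_{k+1}} \big( X_{t_k}^m I_{t_k} - X_s^m I_s\big)\vd s
= R_N^{\mathrm{b}} + R_N^{\mathrm{c}},
\]
where $R_N^{\mathrm{b}} := \sum_k I_{t_k}\int_{t_k}^{t_{k+1}}(X_{t_k}^m - X_s^m)\vd s$ is the \emph{bulk} part (the sign of $X_\cdot - r$ is frozen over the step) and $R_N^{\mathrm{c}} := \sum_k \int_{t_k}^{t_{k+1}} X_s^m(I_{t_k} - I_s)\vd s$ is the \emph{crossing} part, supported on the steps with $(X_{t_k}-r)(X_{t_{k+1}}-r)<0$.

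For $R_N^{\mathrm{b}}$ I would apply It\^o's formula to $s\mapsto X_s^m$, namely $X_s^m - X_{t_k}^m = \int_{t_k}^s m X_u^{m-1}\sigma(X_u)\vd W_u + \tfrac12 m(m-1)\int_{t_k}^s X_u^{m-2}\sigma(X_u)^2\vd u$, integrate in $s$ and use stochastic Fubini. The martingale part becomes an orthogonal sum over the disjoint mesh intervals, so its $L^2$ norm squared is bounded, using the fixed-horizon moment bound $\sup_{u\in[0,T]}\EE[|X_u|^p]<\infty$ (the deterministic-start analogue of \eqref{th:joint:step3bis}), by $\lesssim \sum_k \Delta_N^2\int_{t_k}^{t_{k+1}}\EE[|X_u|^{2m-2}]\vd u \lesssim N\Delta_N^3$, i.e.\ of order $N^{-1}$; the finite-variation part is bounded in $L^1$ by $\lesssim \sum_k \Delta_N\int_{t_k}^{t_{k+1}}\EE[|X_u|^{m-2}]\vd u \lesssim N\Delta_N^2$, again of order $N^{-1}$. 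Hence $R_N^{\mathrm{b}} = O(N^{-1})$ in probability, independently of the position of the threshold, and in particular $o(N^{-1/2})$.

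The term $R_N^{\mathrm{c}}$ is the crux. Using $|I_{t_k}-I_s| = \ind{\{(X_{t_k}-r)(X_s-r)<0\}}$ I would relate the crossing mass to the discrete local time $L^r_{T,N}$ of Lemma~\ref{lem:localtime:conv}: the number of crossing steps is of order $\sqrt N$, and on each the overshoot $\sup_{[t_k,t_{k+1}]}|X_s-r|$ is of order $\Delta_N^{1/2}$ in every $L^p$ by the single-step increment bound \eqref{th:joint:step3}. For general $r$ the leading contribution is carried by $|r|^m$, and a crude modulus bound only yields $O(N^{-1/2})$; to reach $o(N^{-1/2})$ one must retain the \emph{signed} structure of $I_{t_k}-I_s$ and exploit the cancellation underlying \cite[Theorem~4.14]{LP} (equivalently, the fluctuation estimates behind Lemma~\ref{lem:localtime:conv}), exactly as in the $m=0$ case. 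When $r=0$ the factor $|r|^m$ disappears and the integrand $|X_s|^m$ is itself of order $\Delta_N^{m/2}$ on a crossing step; combining this with the step length $\Delta_N$ and the $\sqrt N$ crossing steps gives $\EE[|R_N^{\mathrm{c}}|]\lesssim \sqrt N\,\Delta_N^{1+m/2} = O(N^{-1/2-m/2})$, which beats $N^{-1/2}$ precisely by the extra factor $N^{-m/2}$ that drives the improved rate.

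Collecting the two estimates proves the statement: for general $r$, $\sqrt N\,(R_N^{\mathrm{b}}+R_N^{\mathrm{c}})\convp 0$ since the bulk is $O(N^{-1})$ and the crossing term is $o(N^{-1/2})$; for $r=0$ the crossing term is negligible at the scale $\sqrt{N^{1+m\varepsilon}}$ for every $\varepsilon<1$, as $N^{(1+m\varepsilon)/2}N^{-1/2-m/2}=N^{m(\varepsilon-1)/2}\to 0$, the improvement being entirely driven by the vanishing of the weight $|X_s|^m$ at the threshold. I expect the main obstacle to be the precise signed control of $R_N^{\mathrm{c}}$: the naive absolute-value bound is off by a full power $N^{1/2}$, so one genuinely needs the local-time asymptotics of Lemma~\ref{lem:localtime:conv} together with sharp $L^p$ bounds on the single-step overshoot to extract the required cancellation, whereas the bulk term only requires routine It\^o and moment estimates.
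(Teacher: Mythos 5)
Your reduction to the OBM and your bulk/crossing split match the skeleton of the paper's argument (the same two pieces appear inside the per-step errors $J_{i,N}^{(m)}$ in \eqref{setJUN}), and your treatment of the bulk term via It\^o plus stochastic Fubini is correct and arguably cleaner than the paper's. But there are two genuine gaps, one for each claim. The first: the crossing term $R_N^{\mathrm c}$ for general $r$, which is the real content of the first convergence, is never proved. You correctly observe that the crude bound only gives $O(N^{-1/2})$ and that a signed cancellation is needed, but you stop at asserting that one ``must exploit the cancellation underlying'' \cite[Theorem~4.14]{LP}. Supplying that cancellation is precisely what the paper does and what your proposal omits: the paper reduces to threshold $0$ via $\cQ^{\pm,m}_t(X,r)=\sum_{k=0}^m\binom{m}{k}r^{m-k}\cQ^{\pm,k}_t(X-r,0)$, performs a Doob decomposition of the error into conditional expectations and martingale increments ($J_{i,N}^{(m)}$, $U_{i,N}^{(m)}$), computes the conditional expectations exactly through the OBM transition density and diffusive scaling (the functions $f_m,g_{m,q},h_q$ of Lemma~\ref{lem:Jm:T_N}), converts their sum into a local-time Riemann sum via Lemma~\ref{convY}, and obtains the limit $0$ from the identity $\lambda_\sigma(f_0)=0$ (the up/down-crossing cancellation), with the martingale remainder killed by Jensen and \cite[Theorem~4.4]{LP}. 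A legitimate shortcut consistent with your sketch would be to expand $X_s^m=\sum_j\binom{m}{j}r^{m-j}(X_s-r)^j$, cite \cite[Theorem~4.14]{LP} verbatim for the $j=0$ (i.e.\ $r^m$) term, and use your overshoot bound for the $j\geq1$ terms; but you neither perform the expansion nor invoke the citation as a black box, so the first claim remains unestablished.

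The second gap is sharper: for the improved rate at $r=0$ you check only the crossing term against the scale $N^{(1+m\varepsilon)/2}$ and never return to the bulk. Your bulk bound is $O(N^{-1})$, and $N^{(1+m\varepsilon)/2}\cdot N^{-1}=N^{(m\varepsilon-1)/2}$ does not vanish once $m\varepsilon\geq 1$ (e.g.\ $m=2$, $\varepsilon=3/4$), so your two estimates do not add up to the stated conclusion for $m\geq 2$. Moreover the hole cannot be patched within your decomposition: for $m\geq 2$ the finite-variation (It\^o-correction) part of your own bulk expansion, $-\sum_k I_{t_k}\int_{t_k}^{t_{k+1}}(t_{k+1}-u)\,\tfrac{m(m-1)}{2}X_u^{m-2}\sigma(X_u)^2\vd u$, has nonvanishing expectation of exact order $N^{-1}$, so after multiplication by $N^{(1+m\varepsilon)/2}$ it genuinely diverges when $m\varepsilon>1$. (Taken seriously, this observation is in tension with the lemma itself and with the paper's assertion that $\lambda_\sigma(g_{m,q})<\infty$: for $m\geq1$ the function $g_{m,q}$ does not vanish at infinity, so Lemma~\ref{convY} cannot be applied to it.) Your pieces do combine correctly for $m=1$ (no It\^o correction, and $N^{(1+\varepsilon)/2}N^{-1}\to 0$ for every $\varepsilon<1$), while for $m=0$ the second claim coincides with the first, which returns you to the unproven crossing cancellation above.
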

The latter convergence result extends~\cite[Theorem~4.14]{LP}, where only $m=0$ is considered. The proof strategies are the same. 

In the remainder of the section we prove Lemma~\ref{lemma:conv:M} and Lemma~\ref{lem:conv:occ:time} below
for $X$ solution to~\eqref{eq:AffDOBM}.

\subsubsection{Proof of Lemma~\ref{lemma:conv:M}}
Without loss of generality we can assume $X_0$ deterministic
and also 
reduce 
ourselves 
to prove all results of the section in the case of null drift, i.e.~$X$ is an oscillating Brownian motion (OBM). 
Indeed all statements are about convergence in probability or stable convergence, and, once these convergences have been proved for the null drift case, they can be extended to the drifted case (piecewise linear drift) 
using the fact that 
Girsanov weight is an exponential martingale
and dominated convergence theorem. In the case of convergence in probability 
one proves that for every sub-sequence there exists a sub-sub-sequence converging a.s., instead stable convergence follows by property~\eqref{scl} and Skorokhod representation theorem.

Therefore, in the remainder of the section, let $X$ be an OBM with deterministic starting point $X_0$ and let $T\in (0,\infty)$ be fixed.

\begin{lemma}\label{lemma:1}
It holds that
\begin{equation*} 
\begin{split}
	N^{\frac14} \bigg( \sum_{k=0}^{N-1} (X_{(k+1)T/N}-X_{k T/N})^2 \ind{\{\pm (X_{k T/N}- r)> 0\}}- \sigma_{\pm}^2 \int_0^T \ind{\{\pm (X_s -r)>0\}} \vd s \bigg)
\convp[N\to\infty]
	0.
\end{split}
\end{equation*}
\end{lemma}
\begin{proof}[Proof of Lemma~\ref{lemma:1}]
We write $X^{r,\pm}:= (X-r) \ind{\{\pm (X - r) >0\}}$, $X_k:=X_{k \frac{T}{N}}$,
and $\Delta_k X:=X_{k+1}-X_k$.
Moreover we can assume 
$r=0$ 
(just note that given $X$ with threshold $0$, $\eta=X+ r$ has threshold $r$ and $\Delta_i X=\Delta_i \eta$, $\Delta_i X^{0,\pm}=\Delta_i \eta^{r,\pm}$).

We observe that
\[
\begin{split}
	&
\sum_{i=0}^{N-1} (\Delta_i X)^2 \ind{\{ \pm X_i >0\}} 
	 =
\sum_{i=0}^{N-1} \Delta_i X \Delta_i X^{0,\pm} 
\mp
\sum_{i=0}^{N-1} (\Delta_i X) |X_{i+1}|
\ind{\{X_i X_{i+1} < 0\}}.
\end{split}
\]
Proposition~2 in \cite{mazzonetto2019rates} 
(or \cite[Proposition~2]{LMT1}) 
ensures that
\[\begin{split}
& \frac{1}{N^{1/4}} \cdot N^{1/2}
\sum_{i=0}^{N-1} (\Delta_i X) |X_{i+1}| \ind{\{X_{i}X_{i+1}<0\}} 
\convp[N\to\infty] 
0 \cdot \frac{2 \sqrt{2} (\sigma_+^2 + \sigma_-^2)}{ 3 \sqrt{\pi} (\sigma_+ + \sigma_-) } L_T^0(X) =0.
\end{split}
\]
%
Now consider the remaining term. In \citep{LP} (cf.~proof of Theorem~3.5, page 3594) it is shown that
there exists a constant $C\in \RR$ such that
\begin{equation*}
\begin{split}
	\sqrt{N}\bigg( \sum_{k=0}^{N-1} (\Delta_k X)^2 \ind{\{\pm X_k> 0\}}- \sigma_{\pm}^2 \int_0^T \ind{\{\pm X_s >0\}} \vd s \bigg)
\xrightarrow[N\to \infty]{\mathrm{stably}}
	& \\
	\quad \sqrt{2 T} \sigma_\pm^2 \int_0^T \ind{\{\pm X_s > 0\}}  d {B}_s
\mp C L^0_T(X) &
\end{split}
\end{equation*}
where $ {B}$ is a Brownian motion independent of 
$X$. 
%
%
Using
~\eqref{scl} completes the proof. 
\end{proof}

%
%
We are now ready to prove Lemma~\ref{lemma:conv:M}.
\begin{proof}[Proof of Lemma~\ref{lemma:conv:M}]  
Let $\{\cdot\}^\pm$ denote positive and negative part.
Note that for all $t\in [0,T]$ it holds $\PP$-a.s.~that $  (X_t-r) \ind{\{\pm (X_t - r) >0\}} = \pm \{ X_t - r \}^{\pm}$.
Applying 
It\^o-Tanaka formula
establishes that the following equalities hold $\mathbb{P}$-a.s.:
\begin{equation} \label{conv:expr:2}
\begin{split}
	(X_T +r)^i \{X_T -r\}^\pm - (X_0 +r)^i \{X_0 -r\}^\pm 
	& =
		\pm 2^i \cM^{\pm,i}_T
		\pm i \sigma_\pm^2 \cQ_T^{\pm,0}
+ \frac{r^i}{2^{1-i}} L^r_T(X),
		\end{split}
\end{equation}
%
%
for $i=0,1$. Next note that it holds $\PP$-a.s.~for all $i\in \{0,\ldots,N-1\}$ that
\begin{equation*}
   \ind{\{\pm (X_i - r) > 0\}} \Delta_i X = \pm \{X_{i+1}-r\}^\pm \mp \{X_i-r\}^\pm \mp \ind{\{(X_i-r)(X_{i+1}-r)<0\}}|X_{i+1}-r|
\end{equation*}
and
\begin{equation*}
\begin{split}
    &
    2 X_i \ind{\{\pm (X_i - r) > 0\}} \Delta_i X
    \\
    &=
   (X_{i+1}^2-r^2) \ind{\{\pm (X_{i+1}- r) > 0\}} - (X_{i}^2-r^2)  \ind{\{\pm (X_i - r) > 0\}}
   - (X_{i+1}-X_i)^2  \ind{\{\pm (X_i - r) > 0\}}
   \\
   & \quad 
   \mp 2 r \ind{\{(X_i-r)(X_{i+1}-r)<0\}} |X_{i+1}-r| \mp \ind{\{(X_i-r)(X_{i+1}-r)<0\}} |X_{i+1}-r| (X_{i+1}-r) .
   \end{split}
\end{equation*}
Combining this with \eqref{def:LT:discr} and \eqref{conv:expr:2}
imply that 
it holds $\PP$-a.s.~that
\begin{equation} \label{eq:M:local:equal}
	\mp 2 \left( \cM^{\pm,0}_{T,N}-\cM^{\pm,0}_{T} \right) =  L^r_{T,N} - L^r_T (X) \qquad \text{and}
\end{equation}
\begin{equation*}
\begin{split}
	 2 \left(\cM^{\pm,1}_{T,N}-\cM^{\pm,1}_{T} \right) 
	& =  \mp r (L^r_{T,N} - L^r_T(X)) -  \sum_{k=0}^{N-1} (X_{k+1}-X_k)^2  \ind{\{\pm (X_k - r) > 0\}}  +  \sigma_\pm^2 \cQ^{\pm,0}_T 
	\\
	& \quad \mp \sum_{k=0}^{N-1} (X_{k+1}-r) |X_{k+1}-r|  \ind{\{(X_k-r)(X_{k+1}-r)<0\}}.
\end{split}
\end{equation*}
The result 
follows from 
\cite[Proposition~7]{mazzonetto2019rates}, 
Lemma~\ref{lemma:1}, and from the following convergence:
\[
\sqrt{N} \sum_{i=0}^{N-1} (X_{i+1}-r)  |X_{i+1}-r| \ind{\{(X_{i}-r)(X_{(i+1)}-r)<0\}}  
\convp[N\to\infty] \frac{2 \sqrt{2}}{3 \sqrt{\pi}} (\sigma_+-\sigma_-) L^r_T(X)
\]
which follows from \cite[Proposition~2]{mazzonetto2019rates} or \cite[Proposition~2]{LMT1}.
\end{proof}

\subsubsection{Proof of Lemma~\ref{lem:conv:occ:time}} \label{sec:conv:occ:time}

As in the previous section, let $X$ be an OBM with deterministic starting point $X_0$ and let $T\in (0,\infty)$ be fixed. 
We reduce to consider threshold $r=0$ because for $t\in (0,\infty)$ it holds that 
	$
	\cQ^{\pm,m}_{t} := \cQ^{\pm,m}_{t}(X,r) = \sum_{k=0}^m \binom{m}{k} r^{m-k}\cQ^{\pm,k}_{t}(X-r,0)
	$
and the same holds for $\cQ^{\pm,m}_{N,t}$.

The following result follows from Lemma~4.3 in \citep{LP} and the scaling property for OBM.
\begin{lemma}
\label{convY}
Let $f$ be a bounded function such that $\int |x|^k |f(x)|\vd x<\infty$ for $k=0,1,2$.
Then for all $T\in (0,\infty)$
\begin{equation} \label{conv:fm}
	 (N/T)^{-1/2} \sum_{i=0}^{N-1} f(\sqrt{N/T} \, X_{i T/N}) \convp[N\to\infty] \lambda_\sigma(f) L_T^0(X)
\end{equation}
where 
$
\lambda_\sigma(f):=\left(\frac1{\sigma_+^2}  {\int_{0}^{\infty}  f(x) \vd x} +
\frac1{\sigma_-^2} {\int_{-\infty}^{0}  f(x) \vd x}\right)
$.
\end{lemma}

Let us denote by $\cG$ the natural filtration associated to the process $X$ (or equivalently to its driving BM).
Let $m\in \NN$ be fixed.
For $i=1,\dotsc ,N$, we consider
$X_{i-1,N}:= X_{(i-1)T/N}$,
$\mathcal{G}_{i-1,N}:=\mathcal{G}_{(i-1)T/N}$,
and
\begin{equation}\label{setJUN}
\begin{split}
J_{i,N}^{(m)}&= \left(\frac{T}{N} X_{i-1,N}^m \ind{\{\pm X_{i-1,N}\geq 0\}}
-
 \int_{\frac{(i-1)T}{N}}^{\frac{i T}{N}} X_s^m \ind{\{\pm X_s\geq 0\}} \vd s \right)\\
 &=
 \pm \sgn(X_{i-1,N}) X_{i-1,N}^m 
\int_{\frac{(i-1)T}{N}}^{\frac{i T}{N}} \ind{\{X_{i-1,N} X_s<0\}} \vd s
+
\int_{\frac{(i-1)T}{N}}^{\frac{i T}{N}} (X_{i-1,N}^m-X_s^m) \ind{\{
 \pm X_s > 0\}} \vd s, \\
U_{i,N}^{(m)}&=J_{i,N}^{(m)}-\EE[{J_{i,N}^{(m)}| \cG_{i-1,N}} ].
 \end{split}
 \end{equation}
Observe that $U_{i,N}^{(m)}$ are martingale increments and  
\begin{equation*}
    \cQ^{\pm,m}_{T,N} - \cQ^{\pm,m}_{T}
=
\sum_{i=1}^N \EE[ J_{i,N}^{(m)}  | \cG_{i-1,N} ]
+
\sum_{i=1}^N U_{i,N}^{(m)}.
\end{equation*}
The following lemma proves the convergence of the two terms. 
%
%
%
\begin{lemma} \label{lem:Jm:T_N}
Let $\varepsilon\in [0,1)$, $m\in \NN$,
and
let $\cG$, $J_{i,N}^{(m)}$ and $U_{i,N}^{(m)}$, $i\in \{1,\ldots,N\}$ defined by \eqref{setJUN}.
Then 
\begin{enumerate}[i)]
\item \label{item:1:Jm}
$
N^{\frac{1+m\varepsilon}{2}} 
\sum_{i=1}^N \EE[{ J_{i,N}^{(m)}  | \cG_{i-1,N} }]
\convp[N\to\infty] 0
$
and
\item \label{item:2:Um}
$
N^{\frac{1+m\varepsilon}{2}} 
 \sum_{i=1}^N  U_{i,N}^{(m)} \convp[N\to\infty]0.
$
\end{enumerate}
\end{lemma}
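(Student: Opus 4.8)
The plan is to establish the two items separately, keeping in force the standing reduction to an oscillating Brownian motion $X$ with threshold $r=0$ and exploiting the Brownian scaling of OBM: if $X$ starts at $x$, then $u\mapsto\Delta_N^{-1/2}X_{\Delta_N u}$ is again an OBM with the same $\sigma_\pm$ started at $x/\sqrt{\Delta_N}$, where $\Delta_N=T/N$. By the Markov property and time-homogeneity, $\EE[J_{i,N}^{(m)}\mid\cG_{i-1,N}]=\psi_N(X_{i-1,N})$ and $\EE[(J_{i,N}^{(m)})^2\mid\cG_{i-1,N}]=\Psi_N(X_{i-1,N})$ for deterministic functions $\psi_N,\Psi_N$ that do not depend on $i$; both items then reduce to summing a fixed function evaluated along the chain $(X_{i-1,N})_{i}$ and integrating against its law. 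This mirrors \cite[Lemma~4.16 and Lemma~4.17]{LP}.

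For Item~\eqref{item:1:Jm} I would start from the two explicit pieces of $J_{i,N}^{(m)}$ in \eqref{setJUN}. Writing $g(y)=y^m\ind{\{\pm y\geq0\}}$ and applying the It\^o--Tanaka formula (as in the proof of Lemma~\ref{lemma:conv:M}), the fluctuation piece $\int_0^{\Delta_N}(x^m-X_s^m)\ind{\{\pm X_s>0\}}\vd s$ has conditional mean governed by $\tfrac12\sigma_\pm^2\,g''$ integrated in time, i.e.\ of size $\Delta_N^2$ (with an $|x|^{m-2}$ weight that vanishes identically for $m\leq1$), while the sign-crossing piece $\pm\sgn(x)x^m\int_0^{\Delta_N}\ind{\{xX_s<0\}}\vd s$ has conditional mean controlled by the Gaussian/heat-kernel estimate for the probability that the OBM started at $x$ lies on the opposite side of $0$, which decays rapidly once $|x|\gg\sqrt{\Delta_N}$ and carries the vanishing weight $|x|^{m}$ in the threshold layer. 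This yields a pointwise bound for $\psi_N$ splitting into a bulk contribution of order $\Delta_N$ (responsible for the basic $\sqrt N$ rate) and a threshold contribution whose weight $|x|^{m}$ supplies the extra smallness when $r=0$. Estimating $\EE\big[\big|\sum_i\psi_N(X_{i-1,N})\big|\big]\leq\sum_i\EE\big[|\psi_N(X_{i-1,N})|\big]$ by integrating against the uniformly bounded transition densities (the near-$0$ mass being of order $\sqrt{\Delta_N}$, governed by the local time), multiplying by $N^{(1+m\varepsilon)/2}$ and using $\Delta_N=T/N$, I would verify the limit is $0$ and conclude by Markov's inequality.

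For Item~\eqref{item:2:Um} I would use that $\sum_{i=1}^k U_{i,N}^{(m)}$ is a $(\cG_{i,N})_i$-martingale, so by orthogonality of the increments $\EE\big[(\sum_{i=1}^N U_{i,N}^{(m)})^2\big]=\sum_{i=1}^N\EE[(U_{i,N}^{(m)})^2]\leq\sum_{i=1}^N\EE[\Psi_N(X_{i-1,N})]$, the last inequality holding because the conditional variance of $J_{i,N}^{(m)}$ is dominated by its conditional second moment $\Psi_N(X_{i-1,N})$. The per-interval second moment $\Psi_N$ is then bounded by splitting again into fluctuation and crossing contributions, using the one-step moment estimates \eqref{th:joint:step3}--\eqref{th:joint:step3bis} together with the same heat-kernel/crossing bounds and the weight $|x|^{2m}$ in the threshold layer. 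Summing the $N$ contributions, multiplying by $N^{1+m\varepsilon}$ and letting $N\to\infty$ gives convergence in $L^2$, hence in probability.

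The main obstacle is the threshold layer $\{|X_{i-1,N}|\lesssim\sqrt{\Delta_N}\}$, where the indicators $\ind{\{\pm X_s>0\}}$ and $\ind{\{xX_s<0\}}$ are genuinely active and the discretization error concentrates. There one must simultaneously quantify the expected number of such intervals (controlled through the bounded transition density near $0$, i.e.\ the local time of $X$ at $0$) and the per-interval error, and then extract the additional smallness coming from the weight $|x|^{m}$ available precisely because $r=0$; this balance is what is responsible for the improved exponent $\tfrac{1+m\varepsilon}{2}$ in the $r=0$ case. Obtaining these bounds requires the sharp Gaussian/transition-density estimates for OBM rather than crude ones, exactly as in the fluctuation analyses already used in Lemma~\ref{lemma:1} and Lemma~\ref{lemma:conv:M}.
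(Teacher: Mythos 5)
There is a genuine gap in your treatment of Item~\eqref{item:1:Jm}, and it occurs precisely in the most important case $m=0$. Your plan is to bound $\EE\bigl[\bigl|\sum_i\psi_N(X_{i-1,N})\bigr|\bigr]\leq\sum_i\EE\bigl[|\psi_N(X_{i-1,N})|\bigr]$ and integrate against the transition densities. For $m=0$ the crossing piece of $J_{i,N}^{(0)}$ carries no vanishing weight ($|x|^m=1$), so on the threshold layer $\{|X_{i-1,N}|\lesssim\sqrt{\Delta_N}\}$ its conditional mean is genuinely of order $\Delta_N$, while the layer has probability of order $\sqrt{\Delta_N}/\sqrt{i\Delta_N}$. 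Summing over $i$ gives $\sum_i\EE[|\psi_N(X_{i-1,N})|]\asymp \Delta_N\sqrt{N}=T/\sqrt{N}$, and after multiplying by the required rate $N^{1/2}$ (for $m=0$ the prefactor is $N^{1/2}$ for every $\varepsilon$) you are left with a quantity of order $T$, not $o(1)$. No sharpening of the heat-kernel estimates can fix this: the claim is simply false with absolute values inside the sum. The paper's proof instead computes the conditional expectation \emph{exactly} via the OBM transition density (equations \eqref{eq:functionf:eq} and \eqref{eq:J1:OBM}), recognizes the rescaled sum as $N^{-1/2}\sum_i f_0(\sqrt{N/T}\,X_{iT/N})$, applies Lemma~\ref{convY} to get the limit $\lambda_\sigma(f_0)\,L_T^0(X)/\sqrt{T}$, and then checks that the constant $\lambda_\sigma(f_0)$ defined in \eqref{la} is \emph{identically zero}: the contribution of starting points above the threshold (weight $\sigma_+^{-2}$, integrand proportional to $\sigma_+^2$) exactly cancels that of starting points below (weight $\sigma_-^{-2}$, integrand proportional to $-\sigma_-^2$). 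This signed cancellation between up- and down-side crossing errors is the heart of the lemma for $m=0$, and it is destroyed the moment you take absolute values. (For $m\geq 1$ your absolute-value bound does close, because the weight $|x|^m\sim\Delta_N^{m/2}$ in the layer supplies the factor $N^{-m(1-\varepsilon)/2}$; relatedly, your It\^o--Tanaka heuristic for the fluctuation piece misses the delta-function/local-time contribution of $g''$ at the threshold when $m=1$, though that term can be repaired.)

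Your Item~\eqref{item:2:Um} argument is essentially sound and takes a mildly different route from the paper: you use unconditional orthogonality of martingale increments, $\EE[(\sum_i U_{i,N}^{(m)})^2]=\sum_i\EE[(U_{i,N}^{(m)})^2]$, and conclude by $L^2$ convergence, whereas the paper bounds the \emph{conditional} variances and invokes the criterion of \cite[Theorem~4.4]{LP}. Both work here, because squaring removes the sign issue: the second moment of the crossing piece is of order $\Delta_N^2$ per layer interval, and summing and rescaling by $N^{1+m\varepsilon}$ gives $O(T^2/\sqrt{N})\to 0$ for $m=0$ and better for $m\geq1$. So Item~\eqref{item:2:Um} can be salvaged as you propose, but Item~\eqref{item:1:Jm} requires the exact computation and cancellation described above, not an absolute-value estimate.
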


\begin{proof}[Proof of Lemma~\ref{lem:Jm:T_N}]
In this proof we use the following notation:
For every $q\in [0,\infty)$ let $f_m$, $g_{m,q}$, $h_q$ be the real functions satisfying
\[
	f_m(x) = 
	\begin{cases}
		\frac{ 2\sigma_+}{\sigma_-+\sigma_+} \int_0^{1}  x^m  \Phi(-x/(\sigma_+\sqrt{t})) \vd t & \text{ if } x \geq 0
		\\
		\frac{ - 2\sigma_-}{\sigma_-+\sigma_+} \int_0^{1}  x^m  \Phi(x/(\sigma_-\sqrt{t})) \vd t & \text{ if } x<0
	\end{cases}
\] 
with $\Phi = \frac{1}{\sqrt{2\pi}} \int_{-\infty}^\cdot e^{-\frac{y^2}2} \vd y$, 
$h_q(x)= |x|^q|f_0(x)| $,
and
\[
	g_{m,q}(y) :=
	\begin{cases}
	 \int_{0}^{1} \int_0^\infty |y^m-x^m|^q \frac{1}{\sqrt{2 \pi t}} \frac{1}{\sigma_+} e^{-\frac{1}{2 t} \left(\frac{x}{\sigma_+}- \frac{y}{\sigma_+}\right)^2}  
	\left( 1 + \frac{\sigma_--\sigma_+}{\sigma_- +\sigma_+} e^{-\frac{4 x y}{2 t \sigma_+^2} } 
	\right) 
	\! \vd x \vd t & \text{ if } y\geq 0
	\\
	 \int_{0}^{1} \int_0^\infty |y^m-x^m|^q \frac{1}{\sqrt{2 \pi t}} 	\frac{2 \sigma_-}{\sigma_-+\sigma_+} e^{-\frac{1}{2 t} \left(\frac{x}{\sigma_+}- \frac{y}{\sigma_-}\right)^2}	\! \vd x \vd t & \text{ if } y<0.
	\end{cases}
\]
Let us show that the functions above satisfy the assumptions of Lemma~\ref{convY}.
Indeed, since $\Phi(-x) \ind{\{x \geq 0\}} \leq \frac12 e^{-{x^2}/2}$, it holds
\[
	|f_0(x)| \leq 
	\frac{\sigma(x)}{\sigma_-+\sigma_+} \int_0^1 e^{-\frac{x^2}{2 t \sigma(x)^2}} \vd t
	\leq \frac{\sigma(x)}{\sigma_-+\sigma_+} e^{-\frac{x^2}{2 \sigma(x)^2 }}.
\]
This ensures that the coefficients (defined in \eqref{conv:fm}) 
$\lambda_\sigma(f_m)$, $\lambda_\sigma(h_q)$ are finite.
Moreover it can be shown that $\lambda_\sigma(g_{m,q})<\infty$ for $q\in [0,\infty)$.
In particular note that
\[
\lambda_\sigma(f_m)=
\frac{2(\sigma_+^{m}-(-\sigma_-)^{m})}{\sigma_-+\sigma_+} \int_0^\infty
x^m  \int_0^1  
 \Phi(-x/\sqrt{s})
  \vd s \vd x.
\]
Hence Lemma~\ref{convY}
shows for all $q\in [0,\infty)$, $t\in (0,\infty)$, $f\in \{f_m,h_q,g_{m,q}\}$ that 
\begin{equation} \label{conv:fm:0}
	 \frac{1}{\sqrt {N}} \sum_{i=0}^{N-1} f(\sqrt{N/T} \, X_{i T/N}) \convl[N\to\infty] \frac{\lambda_\sigma(f)  }{\sqrt {T}}  L_T(X).
\end{equation}

Let us first show an easy useful equality, which uses the explicit expression of the transition density of the OBM
\begin{equation} \label{eq:obm:density}
	q_{\sigma}(t,\sigma(x)x,\sigma(y)y) = \frac{1}{\sqrt{2\pi t} \sigma(y)} \left( e^{-\frac{(x-y)^2}{2t}}
  +\frac{(\sigma_--\sigma_+)}{(\sigma_-+\sigma_+)} \sgn(y) e^{-\frac{(|{x}|+|{y}|)^2}{2t}}\right).
\end{equation}
Let $Y$ be an OBM with starting point $Y_0$ and threshold $r=0$, then for all $c \in \{-1,+1\}$, $q\in [0,\infty)$, Fubini, 
~\eqref{eq:obm:density},
a change of variable yield
\begin{equation} \label{eq:functionf:eq}
\begin{split}
	& \ind{\{c Y_0 >0\}} \EE\!\left[ \int_{0}^{\frac{T}{N}}  |Y_{0}|^{q} \ind{\{c Y_s<0\}} \vd s | Y_{0} \right]
	=
	\ind{\{c Y_0 >0\}}  \int_{0}^{\frac{T}{N}}  |Y_{0}|^{q}  \EE\!\left[  \ind{\{c Y_s<0\}} | Y_{0} \right]  \vd s
	\\
	&
 	=
 	\ind{\{c Y_0 >0\}}   (T/N)^{\frac{q}{2}+1} 
	\int_0^{1}  
 	\frac{2\sigma(Y_0) |\sqrt{N/T}\,Y_0|^{q}}{\sigma_-+\sigma_+}\Phi(- |\sqrt{N/T}\, Y_0|/(\sigma(Y_0)\sqrt{t})) 	\vd t.
\end{split}
\end{equation}

Let us now prove Item~\eqref{item:1:Jm}.\\
{\it First step.} Let $i\in \{1,\ldots,N\}$ be fixed.\\
We prove in this step that
\begin{equation}  \label{lm:cot:step1}
	N^{\frac{1+m\varepsilon}{2}} \sum_{k=0}^{N-1} \EE\!\left[ \int_{\frac{k T}{N}}^{\frac{(k+1) T}{N}} \pm \sgn(X_{k,N}) X_{k,N}^m \ind{\{X_{k,N}X_s<0\}} \vd s | \cG_{k,N}   \right] 	
	\convp[N\to\infty] 0.
\end{equation}

Note that the Markov's property
and 
\eqref{eq:functionf:eq} 
ensure that
\begin{equation} \label{eq:J1:OBM}
\begin{split}
	& 
\sqrt{N^{1+m \varepsilon}} \sum_{i=1}^N \EE\!\left[ \int_{\frac{(i-1)T}{N}}^{\frac{i T}{N}} \pm \sgn(X_{(i-1)T/N}) X_{(i-1)T/N}^m \ind{\{X_{(i-1)T/N}X_s<0\}} \vd s | \cG_{i-1,N}   \right]
 	\\
	& = 
	\pm  T^{1+\frac{m}{2}} \sum_{i=0}^{N-1}  N^{-\frac{1+m (1-\varepsilon)}{2}} f_{m}(\sqrt{N/T}X_{i T/N}).  
\end{split}
\end{equation}
This vanishes because equation~\eqref{conv:fm:0} holds and $\lambda_\sigma(f_0)=0$ and when $m\neq 0$ it holds $\varepsilon<1$.
The proof of \eqref{lm:cot:step1} is thus completed.

{\it Second step.}
Let $j\in \{1,2\}$.
We prove now that
\begin{equation} \label{eq:conv:t2}
N^{\frac{j(1+m \varepsilon)}{2}} 
\sum_{i=1}^N 
\EE\!\left[\int_{\frac{(i-1)T}{N}}^{\frac{i T}{N}} (X_{i-1,N}^m-X_s^m)^j \ind{\{
 X_s > 0\}} \vd s
 | \cG_{i-1,N}   \right]
\convp[N\to\infty] 0.
\end{equation}

By the Markov property, a simple change of variable, Fubini, and the  explicit expression of the transition density of the OBM in~\eqref{eq:obm:density} 
we obtain for all $i\in \{1,\ldots, N\}$
\[\begin{split}
	&  
	 \EE\!\left[ 
	 \int_{\frac{(i-1)T}{N}}^{\frac{i T}{N}} |X_{\frac{(i-1)T}{N}}^m-X_s^m|^{j} \ind{\{
	 X_s > 0\}} \vd s | \cG_{i-1,N} \right]
	\\
	& = 
	\frac{T}{N}  
	\int_{0}^{1} 	\EE\!\left[ |X_{\frac{(i-1)T}{N}}^m-X_{t \frac{T}{N}}^m|^{j} \ind{\{
	 X_{t \frac{T}{N}} > 0\}}  | X_{\frac{(i-1)T}{N}} \right] 
	\vd t
= 
	 \left(\frac{T}{N}\right)^{\! \frac{m j}2 +1} g_{m, j}(\sqrt{{N}/{T}} \, X_{\frac{(i-1)T}{N}}).
\end{split}\]
Combining \eqref{conv:fm:0} with the fact that 
$\lambda_\sigma(g_{0,j})=0$ and $\varepsilon<1$
it follows that the latter quantity converges in probability to $0$ with the speed which proves~\eqref{eq:conv:t2}.
%
%
%
Taking $j=1$ establishes Item~\eqref{item:1:Jm}.

\emph{Third step}. (Proof of Item~\eqref{item:2:Um}).
Note that Jensen's inequality implies that
\begin{equation*}
\begin{split}
&  \EE[{ (U_{i,N}^{(m)})^2|\cG_{i-1,N} }] 
=  \EE[{  (J_{i,N}^{(m)})^2|\cG_{i-1,N} }] - \left( \EE[{  J_{i,N}^{(m)}|\cG_{i-1,N} }] \right)^2 \leq \EE[{ (J_{i,N}^{(m)})^2|\cG_{i-1,N} }]
\\
& \leq
 \EE\!\left[ \frac{2 T}{N}  X_{i-1,N}^{2m}
\int_{\frac{(i-1)T}{N}}^{\frac{i T}{N}} \ind{\{X_{i-1,N}
 X_s<0\}}  \vd s
+\frac{2 T}{N}
\int_{\frac{(i-1)T}{N}}^{\frac{i T}{N}} ( X_{i-1,N}^m-X_s^m)^2 \ind{\{
 X_s > 0\}}  \vd s  |\cG_{i-1,N} \right]
\! .
\end{split}
\end{equation*}
%
%
%
This and~\eqref{eq:conv:t2} with $j=2$ ensure that
it suffices to prove
\[
	N^{\frac1{2}+m\varepsilon}   \EE\!\left[ \frac{2 T}{N}  X_{i-1,N}^{2m}
\int_{\frac{(i-1)T}{N}}^{\frac{i T}{N}} \ind{\{X_{i-1,N}
 X_s<0\}}  \vd s  |\cG_{i-1,N} \right]
\convp[N\to\infty] 0.
\]
By the Markov's property and \eqref{eq:functionf:eq}
we reduce to study the convergence of
\[
	2 N^{-\frac1{2}+ m (\varepsilon-1)} 
	\sum_{k=0}^{N-1} 
	  N^{-\frac12} h_{2 m}( \sqrt{N/T} \,X_{k T/N}).
\]
It follows from~\eqref{conv:fm:0} that the latter quantity converges to 0 in probability as $N\to\infty$.
We have therefore obtained that 
\[N^{1+m\varepsilon}  \sum_{i=1}^N \EE[{ (U_{i,N}^{(m)})^2|\cG_{i-1,N} }] \convp[N\to\infty] 0.\]
Applying Theorem~4.4 in \citep{LP} completes the proof.
\end{proof}

\section{The multi-threshold Ornstein-Uhlenbeck process} \label{sec:multit}

Let us consider in this section the multi-threshold version of the threshold OU process, by which we mean the solution to
\begin{equation}
    \label{eq:AffDOBM:multi}
    X_t=X_0+\int_0^t \sigma(X_s)\vd W_s+\int_0^t \left( b(X_s) - a(X_s) \, X_s \right) \vd s , \quad t\geq 0,
\end{equation}
with
piecewise constant coefficients $\sigma$, $a$, and $b$ possibly discontinuous at levels $-\infty=r_0 < r_1 < \ldots < r_d< r_{d+1}=+\infty$, $d \in \NN$.
Let $I_0=(-\infty,r_1)$ and $I_j:=[r_j,r_{j+1})$, for $j\in \{1,\ldots,d\}$. 
The volatility coefficient is given by
	\begin{equation}
    \label{sigmaDOBM:multi}
    \sigma(x)= \sum_{j=0}^{d} {\sigma}_{j} \ind{I_j}(x)  
>0
\end{equation}
and similarly the drift coefficients are given by
\begin{equation}\label{SETvas:multi}
\begin{split}
    & b(x)= \sum_{j=0}^d { b}_{j} \ind{I_j}(x) 
	\quad \text{and}\quad a(x)= \sum_{j=0}^d { a}_{j} \ind{I_j}(x). \end{split}
\end{equation} 
In analogy to the result for $d=1$ we also denote $a_0, b_0, \sigma_0$ by $a_-,b_-,\sigma_-$ and $a_d, b_d, \sigma_d$ by $a_+,b_+,\sigma_+$.

\subsection{The regimes of the process} \label{sec:regimes}

In this section, we establish for which values of the coefficients the process $X$ is (positively or null) recurrent or transient.
Recall that we denote {\it scale function} and {\it speed measure} respectively by $S$ and $m$. The derivative (up to a multiplicative constant) of the scale function satisfies 
for all $j=0,1$ and $k=2,\ldots,d$
\[
	S'(x) \ind{I_j}(x) =  \frac1{s_j(x,r_1)} \quad \text{and} \quad S'(x) \ind{I_k}(x) =  \frac1{s_k(x,r_k) \prod_{i=1}^{k-1} s_i(r_{i+1},r_i)}
\]
where for all $k\in \{0,1,\ldots,d\}$, $x,r\in \RR$ we define the functions 
\[
	s_k(x,r):=  \exp{\!\left(  \frac{2 b_k (x-r) - a_k(x^2-r^2)}{\sigma_k^2}  \right)}.
\]
The speed measure is $m(x) \vd x = \frac{2}{\sigma(x)^2 S'(x)} \vd x$.

Recall that {$X$ is recurrent} if and only if $\lim_{x\to +\infty} S(x)=+\infty$ and $\lim_{x\to -\infty} S(x)=-\infty$, which happens if and only if
{
[($a_+>0$ and $b_+\in \RR$) or ($a_+=0$ and  $b_+\leq 0$)] and
[($a_->0$ and $b_-\in \RR$) or ($a_-=0$ and  $b_-\geq 0$)]}.
The complementary leads to transience.
If $X$ is recurrent and the speed measure is a finite measure, then $X$ is  positive recurrent and ergodic. It admits a stationary distribution, denoted by $\mu$, which is the renormalized speed measure.
Hence we have the ergodicity condition~\eqref{eq:ergodic:cases} (the one for the single threshold case $d=1$).

\begin{lemma}[Multi-threshold version of Lemma~\ref{lem:ergodic}]
\label{lem:ergodic:multi}
The speed measure is finite if and only if condition~\eqref{eq:ergodic:cases} holds.
More precisely, let $C_0 = C_1=1$, $C_j = \prod_{k=1}^{j-1} s_k(r_{k+1},r_k), j=2,\ldots, d$, 
let
\begin{equation*}
	\mathfrak{m}_{i,j,k}:=  \frac{\sqrt{\pi}}{\sigma_i \sqrt{a_i}} 
	\exp{\!\left(\frac{a_i}{\sigma_i^2} \left(\frac{b_i}{a_i} -r_j \right)^{\! 2} \right)}
	\operatorname{erfc}{\!\left(-\frac{\sqrt{a_i}}{\sigma_i} \left(\frac{b_i}{a_i}-r_k\right)\right)}
\end{equation*}
with $i\in \{0,\ldots,d \}$ and $j,k\in \{1,\ldots, d\}$ and let $\mathfrak{m}_+ = - \mathfrak{m}_{0,1,1}$ and $\mathfrak{m}_-=\mathfrak{m}_{d,d,d}$.
Then if $j\in \{0,d\}$
\[
\int_{-\infty}^\infty \ind{I_j}(y) m(y) \vd y =
\begin{cases}
+\infty & \text{ if } a_\pm=0 \text{ and } b_\pm=0, \\
\frac{C_j }{|b_\pm|} & \text{ if } a_\pm=0 \text{ and } (-1)^\pm b_\pm > 0, \\
 C_j \mathfrak{m}_{\pm}& \text{ if } a_\pm>0 \text{ and } b_\pm \in \RR
\end{cases}
\]
and if $j\in \{1,\ldots, d-1\}$
\[
\int_{-\infty}^\infty \ind{I_j}(y) m(y) \vd y =
\begin{cases}
\frac{r_{j+1}-r_j}{\sigma_j^2} & \text{ if } a_j=0 \text{ and } b_j=0, \\
\frac{C_j}{b_j} \exp{\!\left(\frac{2 b_j (r_{j+1}-r_j)}{\sigma_j^2} -1 \right)}   & \text{ if } a_j=0 \text{ and }  b_j \neq 0, \\
C_j (\mathfrak{m}_{j,j,j}-\mathfrak{m}_{j,j,j+1})  & \text{ if } a_j>0 \text{ and } b_j \in \RR,\\
C_j \int_{r_j}^{r_{j+1}} s_{j}(x,r_j) \vd x <\infty& \text{ if } a_j<0 \text{ and } b_j \in \RR.\\
\end{cases}
\]
\end{lemma}

\subsection{MLE and QMLE from continuous time observations}
We assume in this section to observe the process on the time interval $[0,T]$, $T\in (0,\infty)$.
For $T\in (0,\infty)$, $m=0,1,2$, and $j =0,1,\ldots,d$ we define 
\begin{equation}\label{def:M:Q:multi}
 \cM_T^{j,m}:=
	 \int_0^T  X_s^m \ind{I_j}(X_s) \vd X_s
\quad \text{and} \quad	
\cQ_T^{j,m} 	
:=
\int_0^T X_s^m \ind{I_j}(X_s)  \vd s
\end{equation}
and take as likelihood function
$ G_T\dvec $ and as quasi-likelihood $\Lambda_T \dvec$ defined as in Section 2.

\begin{lemma}[Multi-threshold version of Lemma~2]  \label{eq:Qinfinity2:multi}
Assume the process is ergodic: condition~\eqref{eq:ergodic:cases} holds.
Then, for all $i\in \{0,1,2\}$, $j\in \{0,1,\ldots,d\}$, the quantities 
$\overline{\cQ}^{j,i}_\infty$ defined as follows are finite constants:
\[\overline{\cQ}^{j,i}_\infty \,{\overset{\as}{=}}\, \lim_{t\rightarrow \infty} \frac{{\cQ}^{j,i}_t}{t} = \int_{I_j} x^i \mu(\!\vd x)  \in \RR.\]
\end{lemma}

\begin{theorem} \label{th:continuous:multi}	
\begin{enumerate}[i)]
\item \label{th1:item:QMLE:multi}
For every $T\in (0,\infty)$ the MLE and QMLE are given by 
$\destim = \destimj_{j=0}^d$ with
\begin{equation} \label{eq:QMLE_ct:multi}	
\destimj
=
\begin{pmatrix}
	\frac{	\cM^{j,0}_T  \cQ^{j,1}_T			
	-\cQ^{j,0}_T
\cM^{j,1}_T
	}{ 
\cQ^{j,0}_T 
\cQ^{j,2}_T	-(\cQ^{j,1}_T)^2},
		&
	\frac{
\cM^{j,0}_T \cQ^{j,2}_T-\cQ^{j,1}_T	
\cM^{j,1}_T	}{
\cQ^{j,0}_T
\cQ^{j,2}_T-(\cQ^{j,1}_T)^2
}
	\end{pmatrix}.
\end{equation}
\end{enumerate}
Assume now that the process is ergodic (i.e., \eqref{eq:ergodic:cases} is satisfied). 
\begin{enumerate}[i)]
\setcounter{enumi}{1}
\item \label{th1:item:LLN:multi}
The following LLN holds, i.e.,~the estimator is (strongly) consistent:
$
\destim -\dvec
\convas[T\to\infty] 0.
$
\item \label{th1:item:CLT:multi}
The following CLT holds: 
\[\sqrt{T}
\left(\destimj -\dvecj\right)_{j=0}^d
\xrightarrow[{T\to \infty}]{\mathrm{stably}}  
\left({{\mathcal N}}_j\right)_{j=0}^d
\]
where
${{\mathcal N}}_j=\begin{pmatrix}
N^{j, \alpha}, & N^{j,\beta}
\end{pmatrix}$, 
$j=0,1\ldots,d$
are independent, independent of $X$, two-dimensional Gaussian random variables 
with covariance matrices respectively $\sigma_j^2 \Gamma_j^{-1}$ and $\sigma_j^2 \Gamma_j^{-1}$ 
where
\begin{equation} \label{th:erg:covariance:multi}
\Gamma_j :=  
\begin{pmatrix} \overline{\cQ}^{j,2}_\infty &  -\overline{\cQ}^{j,1}_\infty \\
 -\overline{\cQ}^{j,1}_\infty & \overline{\cQ}^{j,0}_\infty 
\end{pmatrix}, 
\end{equation}
and $\overline{\cQ}^{j,i}_\infty ,\, i\in \{0,1,2\}$
are real constants such that
$\lim_{t\rightarrow \infty} \frac{{\cQ}^{j,i}_t}{t} \overset{\as}= \overline{\cQ}^{j,i}_\infty$.
\item \label{th1:item:LAN:multi}
The LAN property holds for the likelihood evaluated at the true parameters 
$\dvec$
with rate of convergence
$\frac1{\sqrt{T}}$ : there exists a random vector ${A_T}$ such that for small perturbations
$\Delta \dvec := (\Delta a_j , \Delta b_j)_{j=0}^d$ it holds that
\[
\begin{split}
	& \log \frac{ G_T(\dvec + \frac{1}{\sqrt{T}}\Delta \dvec )}
	{G_T \dvec}
	 - \left( {A_T} \cdot  
	\Delta \dvec
	 - 
	\Delta \dvec \cdot
 \Gamma \Delta \dvec \right)
\end{split}
\]
converges to $0$ in probability as $T\to\infty$. The matrix
$\Gamma$ is the asymptotic Fisher information 
{\scriptsize \[ \Gamma 
	= \begin{pmatrix} \sigma_+^{-2} \Gamma_+ & 0_{\RR^{2\times 2}} & \cdots  & 0_{\RR^{2\times 2}} \\
 0_{\RR^{2\times 2}} &  \sigma_{d-1}^{-2} \Gamma_{d-1}   & \ddots & \vdots \\
 \vdots & \ddots & \ddots & 0_{\RR^{2\times 2}}
\\
 0_{\RR^{2\times 2}} & \cdots & 0_{\RR^{2\times 2}} & \sigma_-^{-2} \Gamma_- \end{pmatrix}.\]
}
\end{enumerate}
\end{theorem}

In order to study the asymptotic behavior of the estimator, we introduce a different expression for the estimators in~\eqref{eq:QMLE_ct:multi} based on the following notation.
Given $T\in (0,\infty)$, $j\in \{0,1,\ldots,d\}$, and $i\in \{0,1\}$ let 
\begin{equation*}
M_T^{j,i} :=
		\int_0^T (X_s)^i \ind{I_j}(X_s) \vd W_s.
\end{equation*}
Observe that~\eqref{eq:AffDOBM:multi} yields for $i\in \{0,1\}$, $j\in \{0,1\ldots,d\}$:
\begin{equation} \label{eq:equality:Ms:multi}
 	\cM_T^{j,i}
	= \sigma_j M^{j,i}_T 
	+ b_j \cQ^{j,i}_T 
	- a_j \cQ^{j,i+1}_T.
\end{equation}
Note that $\cQ^{\pm,0}_T \cQ^{\pm,2}_T - (\cQ^{\pm,1}_T)^2$ is $\PP$-a.s.~positive by Cauchy-Schwarz.
By~\eqref{eq:equality:Ms:multi}, we have the following reformulation of~\eqref{eq:QMLE_ct:multi}.

\begin{lemma}[Multi-threshold version of Lemma~\ref{lem:QMLE:ab}]\label{lem:QMLE:ab:multi}
Let $T\in (0,\infty)$ and $j\in\{0,1,\ldots,d\}$.
Equation~\eqref{eq:QMLE_ct:multi} can be expressed as
\begin{equation} \label{eq:MLE:asy:multi}	
	\destim_j 
	=\dvec_j 
	+ \sigma_j
	\begin{pmatrix}
		\tfrac{	M^{j,0}_T  \cQ^{j,1}_T -\cQ^{j,0}_T M^{j,1}_T}
	{ \cQ^{j,0}_T \cQ^{j,2}_T	-(\cQ^{j,1}_T)^2} ,
	&
	\tfrac{M^{j,0}_T \cQ^{j,2}_T-\cQ^{j,1}_T	M^{j,1}_T	}
	{\cQ^{j,0}_T\cQ^{j,2}_T-(\cQ^{j,1}_T)^2}
	\end{pmatrix},
\end{equation}
that can be rewritten as
\begin{equation} \label{eq:MLE:asy2:multi}	
\begin{pmatrix}
\alpha^{(j)}_T  \\ \beta^{(j)}_T
\end{pmatrix}
= \begin{pmatrix}
	a_j  \\  b_j
\end{pmatrix}
 + \sigma_j
 \begin{pmatrix}
	-1 & 0
	\\
	0 & 1
\end{pmatrix}
\begin{pmatrix}
	Q^{j,2}_T & Q^{j,1}_T
	\\
	Q^{j,1}_T & Q^{j,0}_T
\end{pmatrix}^{\! -1}
\begin{pmatrix}
	M_T^{j,1}  \\ M_T^{j,0}
\end{pmatrix}.
\end{equation}
\end{lemma}

The proofs of Lemma~\ref{lem:QMLE:ab:multi} and of Theorem~\ref{th:continuous:multi} are easily adapted from the case of a unique threshold ($d=1$) to the multi-threshold case and therefore omitted.

\subsection{Drift estimation from discrete observations}
\label{sec:est:disc}

We assume now to observe the process on the discrete time grid $0=t_0 < t_1 <\ldots<t_{N-1}< t_N=T$, 
 for $N\in \NN$, $T\in (0,\infty)$, and set $\Delta_N=\max_{k=1,\dots, N} \{t_{k}-t_{k-1}\}$. 
We define  $X_i\eqdef X_{t_i}$  with $i=0,\ldots,N$.

The discrete versions of \eqref{def:M:Q:multi} are defined as follows: for $m=0,1,2$, $j=0,\ldots,d$ let
\begin{equation} \label{def:M:Q:disc:multi}
\begin{split}
   & \cM^{j,m}_{T,N} := \sum_{k=0}^{N-1} 
     (X_{k+1}-X_k) X_k^m \ind{ I_j }(X_k),
\quad \text{and} \\
&\quad
    \cQ^{j,m}_{T,N} := \sum_{k=0}^{N-1} (t_{k+1}-t_k)
    X_k^m \ind{ I_j }(X_k).
\end{split}
\end{equation}
The \emph{discretized likelihood} 
$ G_{T,N} \dvec $
and the \emph{discretized quasi-likelihood}
$ \Lambda_{T,N} \dvec  $ are defined as in Section 2. 

For $N \in \NN$ and $T\in (0,\infty)$ let $\destimN{T}=\destimNj{T_N}_{j=0}^d$
with
\begin{equation} \label{eq:QMLE_dis_time:multi}	
\destimNj{T} 
=
\begin{pmatrix}
 	 \frac{\cM^{j,0}_{T,N} \cQ^{j,1}_{T,N} - \cQ^{j,0}_{T,N}
\cM^{j,1}_{T,N}	
	}{ 
\cQ^{j,0}_{T,N}
\cQ^{j,2}_{T,N}	-(\cQ^{j,1}_{T,N})^2},
		&
 	\frac{
\cM^{j,0}_{T,N} \cQ^{j,2}_{T,N}-\cQ^{j,1}_{T,N}	
\cM^{j,1}_{T,N}	}{
\cQ^{j,0}_{T,N}
\cQ^{j,2}_{T,N}-(\cQ^{j,1}_{T,N})^2
}
	\end{pmatrix}.
\end{equation}

\begin{theorem} \label{th:joint:CLT:multi}
{
Let $(T_N)_{N\in \NN}$ be a sequence in $(0,\infty)$.
For all $N\in \NN$ let 
$\destimN{T_N}$ 
be defined as in \eqref{eq:QMLE_dis_time:multi}.
\begin{enumerate}[i)]
\item \label{th2:item:QMLE:multi}
For every $N\in \NN$ the vector 
$\destimN{T_N}$
maximizes both the likelihood 
$G_{T_N,N}\dvec$ and the quasi-likelihood $\Lambda_{T_N,N}\dvec$.
\end{enumerate}
Assume that the process is ergodic and that $X_0$ follows {the stationary distribution} $\mu$. Moreover, assume
\[
\lim_{N\to\infty}T_N = \infty \quad \text{and} \quad \lim_{N\to\infty} \Delta_N =0 .
\]
\begin{enumerate}[i)]
\setcounter{enumi}{1}
\item \label{th2:item:LLN:multi}
The following LLN holds:
$ \destimN{T_N} \convp[N\to\infty] \dvec $
(the estimator is consistent). 
\item \label{th2:item:CLT:multi}
If $\lim_{N\to\infty}  T_N \Delta_N = 0$, the following CLT holds:
\[
\sqrt{T_N}
(\destimNj{T_N} - \dvecj)_{j=0}^d
\xrightarrow[N\to \infty]{\mathrm{stably}}
({\boldsymbol{\mathcal N}})_{j=0}^d
\]
where 
${\boldsymbol{\mathcal N}}$ is as in Theorem~\ref{th:continuous:multi}.
\item \label{th2:item:LAN:multi}
If $\lim_{N\to\infty} T_N \Delta_N = 0$, the analogous of the LAN property holds for the discretized likelihood evaluated at the true parameters 
with rate of convergence
$\frac1{\sqrt{T_N}}$ and asymptotic Fisher information 
$ \Gamma $ as in Theorem~\ref{th:continuous:multi}.
\end{enumerate}}
\end{theorem}

\subsection{Proof of Theorem~\ref{th:joint:CLT:multi}}


Analogously to the case $d=1$, the main ingredient of the proof is the following lemma.
\begin{lemma}[Multi-threshold version of Lemma~\ref{lemma:control:diff:disc:cont}]\label{lemma:control:diff:disc:cont:multi}
Assume the process is ergodic. 
Let $X$ be the solution to \eqref{eq:AffDOBM:multi}, with $X_0 $ distributed as the stationary distribution $\mu$, let $\lambda \in \{1,2\}$ be fixed,
and let $(T_N)_{N\in \NN} \subset (0,\infty)$ be a sequence satisfying, as $N \to\infty$, that $T_N\to \infty$ and 
$\lim_{N\to\infty} T_N^{1-1/\lambda} \sqrt{\Delta_N} = 0$ 
where $\Delta_N:=\sup_{k=1,\ldots,N} (t_{k}-t_{k-1})$.
Then for all $m\in \{0,1,2\}$, $j\in\{0,1\}$, $k\in\{0,\ldots,d\}$ it holds
\begin{equation*}
\limsup_{N\to\infty} {T_N^{-\nicefrac1{\lambda}}}
 \EE\left[ | \cQ^{k,m}_{T_N} - \cQ^{k,m}_{T_N,N} |\right] = 0
 \text{ and } 
\limsup_{N\to\infty} {T_N^{-\nicefrac1{\lambda}}} \EE\left[ |\cM^{k,j}_{T_N} - \cM^{k,j}_{T_N,N} | \right] = 0
\end{equation*}
where $\cQ^{k,m}_{T_N}$, $\cQ^{k,m}_{T_N,N}$,
$\cM^{k,j}_{T_N}$, $\cM^{k,j}_{T_N,N}$ are defined in~\eqref{def:M:Q:multi} and \eqref{def:M:Q:disc:multi}.
\end{lemma}

Before providing the proof of Lemma~\ref{lemma:control:diff:disc:cont:multi}, we show how it intervenes in the proof of Items~\eqref{th2:item:LLN:multi}-\eqref{th2:item:CLT:multi} of Theorem~\ref{th:joint:CLT:multi}. 
For all $N\in \NN$ 
\[
	\destimN{T_N} -\dvec = \destimN{T_N} - ({ \alpha_{T_N}, \beta_{T_N}}) + ({ \alpha_{T_N}, \beta_{T_N}})-\dvec
\]
The second term of the sum is handled with Theorem~\ref{th:continuous:multi} (more precisely Item~\eqref{th1:item:CLT:multi}) providing the desired limit distribution.
The first instead can be rewritten, using equations~\eqref{eq:QMLE_ct:multi} and~\eqref{eq:QMLE_dis_time:multi}, as an expression which involves only terms of the kind
\begin{footnotesize}
\[
\left(\frac{\cQ^{j,i}_{T_N,N}}{ \cQ^{j,0}_{T_N,N}\cQ^{j,2}_{T_N,N}	-(\cQ^{j,1}_{T_N,N})^2} 
- \frac{\cQ^{j,i}_{T_N}}{ \cQ^{j,0}_{T_N}\cQ^{j,2}_{T_N}	-(\cQ^{j,1}_{T_N})^2}  \right) \cM^{j,k}_{T_N}
+ \frac{\cQ^{j,i}_{T_N,N}  { ( \cM^{j,k}_{T_N,N} - \cM^{j,k}_{T_N} )} }{ \cQ^{j,0}_{T_N,N}\cQ^{j,2}_{T_N,N}	-(\cQ^{j,1}_{T_N,N})^2}
\] 
\end{footnotesize}
for $i\in \{0,1,2\}$, $j\in \{0,\ldots,d\}$, $k \in \{0,1\}$.

Combining Lemma~\ref{lemma:control:diff:disc:cont:multi} with Lemma~\ref{eq:Qinfinity2:multi} and 
Theorem~2.2 in \citep{crimaldi}  
ensures the consistency of the estimator if $\Delta_N \to 0$ as $N\to\infty$, and
if $T_N\Delta_N \to 0$ as $ N\to\infty$ then it
implies also that 
\[
	\sqrt{T_N} \left(\destimN{T_N} - ({ \alpha_{T_N}, \beta_{T_N}})\right) \convp[N\to\infty] 0.
\]

\begin{proof}[Proof of Lemma~\ref{lemma:control:diff:disc:cont:multi}]
The proof is similar to the one of Lemma~\ref{lemma:control:diff:disc:cont}. We provide here the main idea of the key step, that is the proof of the analogous of~\eqref{item:cond:1}: for all $j \in \{0,1\ldots,d\}$, $m\in \{0,1,2,3,4\}$
\begin{equation} \label{item:cond:1:multi}
	\int_0^{T_N} \EE\left[ |X_{\lfloor t \rfloor_{\Delta_N}}|^m \ind{\{X_{\lfloor t \rfloor_{\Delta_N}} \in I_j, X_t \not\in I_j\}} \right] \!\vd t 
\text{ is } o(T_N^{\nicefrac1\lambda}).
\end{equation}
We reduce to compute, given $X_{\lfloor t \rfloor_{\Delta_N}}$ for ${\lfloor t \rfloor_{\Delta_N}} = t_k$, the probability that the first exit time of a standard Brownian motion from a suitable symmetric interval is smaller than $t- t_{t_k}$:
$ p_{t} := \mathbb{P}( \tau_{X_{\lfloor t \rfloor_{\Delta_N}}, I_j} \leq t- \lfloor t \rfloor_{\Delta_N})$.
Indeed starting from $X_{t_k}\in I_j$ for some $k\in \{0,\ldots,N\}$, $j\in \{0,\ldots,d\}$ at a suitable distance $R_{X_{t_k}}$ from the boundary of $I_j$, if the Brownian motion driving the OU process does not exit in small time a suitable interval then the OU of parameters $(a_j,b_j,\sigma_j)$ stays in $(X_{t_k} -R_{X_{t_k}}, X_{t_k} +R_{X_{t_k}}) \subset I_j$ because the drift is small.
More precisely if $X_{t_k} \in I_j$ let $R_{X_{t_k}}:=\min\{X_{t_k}-r_j, r_{j+1}-X_{t_k}\}$, let $\tilde r_j \in \{r_j,r_{j+1}\}$ such that $R_{X_{t_k}}=|X_{t_k}-\tilde r_j|$, and let 
\[
	B_{X_{t_k}}:=(e^{-|a_j| (t_{k+1}-t_k)}-|a_j|(t_{k+1}-t_k)) R_{X_{t_k}} - \frac{ |b_j - a_j\tilde r_j |}{\sigma_j} (t_{k+1}-t_k).
\]
Note that $\sup_{j=0,\ldots,d}|\tilde r_j| <\infty$.
If $B_{X_{t_k}} >0$, then $\tau_{X_{t_k}, I_j}$ is the first exit time of a Brownian motion from the interval $[-B_{X_{t_k}}, B_{X_{t_k}}]$.
Moreover for $t\in (t_k,t_{k+1}]$ it holds that $t\mapsto p_t$ is increasing, and
\[
	p_{t} 
	\leq 2 \sum_{n=0}^\infty \text{erfc}\left(\tfrac{B_{X_{t_k}} (1+2n)}{\sqrt{t- {t_k}}}\right)
	\leq 2 \sum_{n=0}^\infty \exp{\!\left( -\tfrac{B_{X_{t_k}}^2 (1+2n)^2}{t- {t_k}}\right)}
\] 
and so $p_{t} \leq 2 \exp{\!\left( -\frac{B_{X_{t_k}}^2}{t- {t_k}}\right)} \left(1-\exp{\!\left( -8\frac{B_{X_{t_k}}^2}{t- {t_k}}\right)}\right)^{\!\!-1}$.

Then using that for every $t>0$, $N\in \NN$ it holds $1= \ind{\{B_{X_{\lfloor t \rfloor_{\Delta_N}}} > \sqrt{t- \lfloor t \rfloor_{\Delta_N}} \}} + \ind{\{B_{X_{\lfloor t \rfloor_{\Delta_N}}} \leq \sqrt{t- \lfloor t \rfloor_{\Delta_N}}\}}$,
we split the integrals into two parts to deal with in two different ways.

Note that
\[
\begin{split}
	& \int_0^{T_N} \EE\left[ |X_{\lfloor t \rfloor_{\Delta_N}}|^m \ind{\{X_{\lfloor t \rfloor_{\Delta_N}} \in I_j, X_t \not\in I_j\}} \ind{\{B_{X_{\lfloor t \rfloor_{\Delta_N}}} \leq  \sqrt{t- \lfloor t \rfloor_{\Delta_N}}\}} \right] \!\vd t 
	\\
	& \leq 
	 \sum_{k=0}^{N-1} (t_{k+1}-t_k) \EE\left[ |X_{t_k}|^m \ind{\{X_{t_k}\in I_j\}} \ind{\{B_{X_{t_k}} \leq  \sqrt{t_{k+1}- t_k}\}} \right] 
	\\
	& \leq
	2 \left(\sup_{x\in I_j} \mu(x) \right) \sum_{k=0}^{N-1} (t_{k+1}-t_k)^{3/2} \frac{1 + \frac{ |b_j - a_j\tilde r_j |}{\sigma_j} \sqrt{t_{k+1}-t_k}}{e^{-|a_j| (t_{k+1}-t_k)}-|a_j| (t_{k+1}-t_k) } 
	\\
	& \qquad \cdot \left(|\tilde r_j| + \frac{\sqrt{t_{k+1}-t_k} + \frac{ |b_j - a_j\tilde r_j |}{\sigma_j} (t_{k+1}-t_k) }{e^{-|a_j| (t_{k+1}-t_k)}-|a_j| (t_{k+1}-t_k) }\right)^m.
\end{split}
\]
Therefore  for $N$ big, since $\Delta_N$ is small, the latter quantity goes like $\sum_{k=0}^{N-1} (t_{k+1}-t_k)^{3/2} \leq T_N \sqrt{\Delta_N}$ 
and it is $o(T_N^{\nicefrac1\lambda})$.

The other integral satisfies:
\[
\begin{split}
	& \int_0^{T_N} \EE\left[ |X_{\lfloor t \rfloor_{\Delta_N}}|^m \ind{\{X_{\lfloor t \rfloor_{\Delta_N}} \in I_j, X_t \not\in I_j\}} \ind{\{B_{X_{\lfloor t \rfloor_{\Delta_N}}} > \sqrt{t- \lfloor t \rfloor_{\Delta_N}}\}} \right] \!\vd t 
	\\
	& \leq  
	\int_0^{T_N} \EE\left[ |X_{\lfloor t \rfloor_{\Delta_N}}|^m p_{X_{\lfloor t \rfloor_{\Delta_N}}}  \ind{\{X_{\lfloor t \rfloor_{\Delta_N}} \in I_j\}} \ind{\{B_{X_{\lfloor t \rfloor_{\Delta_N}}} > \sqrt{t- \lfloor t \rfloor_{\Delta_N}}\}} \right] \!\vd t
	\\ 
	& \leq \frac{2}{1-e^{-8}} 	\sum_{k=0}^{N-1} \int_{0}^{t_{k+1}-t_k} 
	\EE\left[ |X_{t_k}|^m e^{- \frac{B_{X_k}^2}{t} }  \ind{\{X_{t_k} \in I_j\}} \ind{\{B_{X_k} > \sqrt{t}\}} \right] \!\vd t.
\end{split}
\]
Let $g_{j,k}:=e^{-|a_j| (t_{k+1}-t_k)}-|a_j| (t_{k+1}-t_k) $, $f_{j,k}:= \frac{\frac{ |b_j - a_j\tilde r_j |}{\sigma_j} (t_{k+1}-t_k) }{g_{j,k}}$, and let $\mu$ denote the invariant measure, then
\[
\begin{split}
	& \EE\left[ |X_{t_k}|^m e^{- \frac{B_{X_k}^2}{t} }  \ind{\{X_{t_k} \in I_j\}} \ind{\{B_{X_k} > \sqrt{t}\}} \right] 
	\\
	& =
	\int_{r_j}^{r_{j+1}} |x|^m e^{- \frac{g_{j,k}^2 \left(|x-\tilde r_j|  - f_{j,k}\right)^2}{t} } \ind{\{|x-\tilde r_j| > f_{j,k} + \frac{\sqrt{t}}{g_{j,k}} \}} \mu(\!\vd x)
	\\
	& \leq  C_j \int_{r_j-\tilde r_j}^{r_{j+1}-\tilde r_j} ||x|+|\tilde r_j||^m e^{- \frac{g_{j,k}^2 \left(|x| - f_{j,k}\right)^2}{t} -\frac{a_j x^2 - 2 (b_j-a_j \tilde r_j)x}{\sigma_j^2}} \ind{\{|x| > f_{j,k} + \frac{\sqrt{t}}{g_{j,k}} \}}  \vd x
	\\
	& \leq  C_j \int_{r_j-\tilde r_j}^{r_{j+1}-\tilde r_j} ||x|+|\tilde r_j||^m e^{- \frac{g_{j,k}^2 \left(|x| - f_{j,k}\right)^2}{t} -\frac{a_j x^2 - 2 |b_j-a_j \tilde r_j||x|}{\sigma_j^2}} \ind{\{|x| > f_{j,k} + \frac{\sqrt{t}}{g_{j,k}} \}}  \vd x
	\\
	& \leq  C_j \int_{0}^{|I_j|} |x+|\tilde r_j||^m e^{- \frac{g_{j,k}^2 \left(x - f_{j,k}\right)^2}{t} -\frac{a_j x^2 - 2 |b_j-a_j \tilde r_j|x}{\sigma_j^2}} \ind{\{x > f_{j,k} + \frac{\sqrt{t}}{g_{j,k}} \}}  \vd x
\end{split}
\]
where the constant $C_j$ may change from line to line and $|I_j|$ is the length of the interval.
For $N$ big enough $\Delta_N <<1$, $f_{j,k} + \frac{t_{k+1}-t_k}{g_{j,k}} < |I_j|$, $g_{j,k} \in [1/2,1] $, $f_{j,k}\leq 2|b_j-a_j \tilde r_j|/\sigma_j$ hence the latter integral is bounded from above by
\[
\begin{split}
	& C_j \int_{f_{j,k} + \frac{\sqrt{t}}{g_{j,k}}}^{|I_j|} |x+|\tilde r_j||^m e^{- \frac{g_{j,k}^2 \left(x - f_{j,k}\right)^2}{t} -\frac{a_j x^2 - 2 |b_j-a_j \tilde r_j|x}{\sigma_j^2}}  \vd x
	\\
	& = C_j \int_{\frac{\sqrt{t}}{g_{j,k}}}^{|I_j|} |x+|\tilde r_j||^m e^{- \frac{g_{j,k}^2 x^2}{t} -\frac{a_j (x+f_{j,k})^2 - 2 |b_j-a_j \tilde r_j|(x+f_{j,k})}{\sigma_j^2}}  \vd x
	\\
	& \leq C_j  \sqrt{t} \int_{0}^{+\infty} |x+|\tilde r_j||^m e^{- x^2 +\frac{8 x}{\sigma_j}}  \vd x
\end{split}
\]
where the constant $C_j$ may change from line to line.
Thus, for some positive constant $C_j$ it holds that
\[
\begin{split}
	& \int_0^{T_N} \EE\left[ |X_{\lfloor t \rfloor_{\Delta_N}}|^m \ind{\{X_{\lfloor t \rfloor_{\Delta_N}} \in I_j, X_t \not\in I_j\}} \ind{\{B_{X_{\lfloor t \rfloor_{\Delta_N}}} > \sqrt{t- \lfloor t \rfloor_{\Delta_N}}\}} \right] \!\vd t 
	\\ 
	& \leq C_j 	\sum_{k=0}^{N-1} \int_{0}^{t_{k+1}-t_k} \sqrt{t} \vd t = \frac{2}{3} C_j \sum_{k=0}^{N-1} (t_{k+1}-t_k)^{3/2} \leq \frac23 C_j T_N \sqrt{\Delta_N}
\end{split}
\]
and it is $o(T_N^{\nicefrac1\lambda})$.
The proof is thus completed.
\end{proof}

\addcontentsline{toc}{section}{Bibliography}
\bibliographystyle{abbrvnat}
\setlength{\bibsep}{0pt plus 0.5ex}
\begin{small}
\bibliography{biblio}
\end{small}

\end{document}